\documentclass[11pt]{amsart}

\usepackage{amssymb,latexsym,amsmath,amsfonts}
\usepackage[mathscr]{eucal}
\usepackage{mathrsfs}
\usepackage{graphicx}
\usepackage{amscd}
\usepackage{amsthm}
\usepackage{amsxtra}
\usepackage[nointegrals]{wasysym}
\usepackage{bm}
\usepackage{mathtools}
\usepackage{hyperref}
\usepackage[usenames,dvipsnames]{xcolor}
\usepackage[normalem]{ulem}
\usepackage{lineno}
\pdfoutput=1

\setlength{\parindent}{0.25in}
\setlength{\parskip}{1pt}

\oddsidemargin=-0.2in \evensidemargin=-0.2in \textwidth=6.6in

\numberwithin{equation}{section}
\theoremstyle{definition}
\newtheorem{definition}{Definition}[section]
\theoremstyle{definition}

\theoremstyle{plain}
\newtheorem{theorem}[definition]{Theorem}
\newtheorem{lemma}[definition]{Lemma}

\newtheorem{Prop}[definition]{Proposition}

\newcommand{\beas}{\begin{eqnarray*}}
\newcommand{\eeas}{\end{eqnarray*}}
\newcommand{\bes} {\begin{equation*}}
\newcommand{\ees} {\end{equation*}}
\newcommand{\be} {\begin{equation}}
\newcommand{\ee} {\end{equation}}
\newcommand{\bea} {\begin{eqnarray}}
\newcommand{\eea} {\end{eqnarray}}

\newcommand{\eps}{\varepsilon}

\newcommand{\de} {\delta}
\newcommand{\om}{\omega}


\newcommand{\Om}{\Omega}

\newcommand{\cont}{\mathcal{C}}
\newcommand{\hol}{\mathcal{O}}


\newcommand{\wt}{\widetilde}

\newcommand{\CC}{\mathbb{C}^2}
\newcommand{\Cn}{\mathbb{C}^n}

\newcommand{\rl}{\mathbb{R}}
\newcommand{\Z} {\mathbb{Z}}
\newcommand{\N} {\mathbb{N}}

\title{Hypersurface convexity and extension of K\"ahler forms}
\author{Blake J. Boudreaux}
\address[Blake J. Boudreaux]{The University of Western Ontario}
\email{bboudre7@uwo.ca}

\author{Purvi Gupta}
\address[Purvi Gupta]{Indian Institute of Science}
\email{purvigupta@iisc.ac.in}

\author{Rasul Shafikov}
\address[Rasul Shafikov]{The University of Western Ontario}
\email{shafikov@uwo.ca}

\begin{document}
\begin{abstract}
The following generalization of a result of  S. Nemirovski is proved:  if $X$ is either a projective or a Stein manifold and $K\subset X$ is a compact sublevel set of a strictly plurisubharmonic function $\varphi$ defined in a neighborhood of $K$, then $X\setminus K$ is a union of positive divisors
if and only if  $dd^c\varphi$ extends to a Hodge form on $X$. For an arbitrary compact subset $K\subsetneq X$, this gives that $X\setminus K$ is a union of positive divisors if and only if $K$
admits a neighbourhood basis of sublevel sets of strictly plurisubharmonic functions with the $dd^c$-extension property.
\end{abstract}

\maketitle

\section{Introduction}
A compact set $K$ in $\mathbb C^n$ is called rationally convex if for any point $z \in \mathbb C^n \setminus K$, there exists a complex (algebraic) hypersurface that passes through $z$ and avoids $K$. It is generally a difficult problem to determine whether a given compact set is rationally convex. A striking characterization for totally real submanifolds was obtained by Duval--Sibony~\cite{DuSi95}: a compact totally real submanifold $M\subset \mathbb C^n$ is rationally convex if and only if there exists a K\"ahler form {\color{black} $\omega$ on $\mathbb C^n$ with respect to which $M$ is isotropic (Lagrangian if $\dim M = n$), i.e., $\iota_M^*\omega = 0$, where $\iota_M : M \to \mathbb C^n$ is the inclusion map}. A variation of this was proved by Nemirovski~\cite{Ne08}: if a compact set $K \subset \mathbb C^n$ is given as a sublevel set of a strictly plurisubharmonic function $\varphi$ defined on a neighborhood of $K$, then $K$ is rationally convex if and only if there exists a K\"ahler form $\omega$ on $\mathbb C^n$ such that $\omega$ agrees with $dd^c \varphi$ on some neighborhood of $K$. In particular, this characterization can be applied to closures of bounded strongly pseudoconvex domains in $\mathbb C^n$.

Rational convexity can be generalized to general complex manifolds in different ways. Following Guedj \cite{Gu99}, we say that a compact set $K$ on a projective manifold $X$ is rationally convex if
for any point $p\in X\setminus K$, there exists a positive divisor {\color{black} (see Section~\ref{S:prelim1})} passing though $p$ and avoiding $K$. When $X$ is a Stein manifold, we consider two competing notions of convexity. We say that a compact set $K \subset X$  is convex with respect to hypersurfaces if for any $p\in X \setminus K$, there exists a complex hypersurface $\Sigma$ that passes through $p$ but avoids $K$.
We say that $K$ is  is convex with respect to principal hypersurfaces if $\Sigma$ can be given as the zero set of an entire function on $X$. See Section~\ref{S:prelim1} for further discussion of various notions of convexity on complex manifolds. The goal of this paper is to generalize Nemirovski's characterization of convexity to projective and to Stein manifolds.

\begin{theorem}\label{T:main}
	Let $X$ be a projective manifold and $\varphi$ be a smooth strictly plurisubharmonic function on an open set $U\subseteq X$. The compact set $K=\{z\in U\,:\,\varphi(z)\leq 0\}$ is rationally convex if and only if $dd^c\varphi$ extends off a neighborhood of $K$ to a Hodge form on $X$.
\end{theorem}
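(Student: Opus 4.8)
\section*{Proof proposal}

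The plan is to fix once and for all a positive holomorphic line bundle $(L,h_0)$ on $X$ whose Chern form $\omega_H=\tfrac{i}{2\pi}\Theta(L,h_0)$ is a Hodge form (such $L$ exists by Kodaira, since $X$ is projective), and to translate both implications into statements about sections of the powers $L^{\otimes k}$ and the $\omega_H$-plurisubharmonic functions $\tfrac1k\log|s|_{h^k}$ they produce. Throughout I will use that, on a projective manifold, $K$ is rationally convex exactly when every $z\in X\setminus K$ lies on the zero divisor of some $s\in H^0(X,L^{\otimes k})$ ($k\ge 1$) that does not vanish on $K$ (cf.\ Section~\ref{S:prelim1}). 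The two directions are then: produce such separating sections from the extension (sufficiency), and assemble an extension from the separating divisors (necessity).

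For sufficiency (extension $\Rightarrow$ rational convexity), suppose $dd^c\varphi$ extends to a Hodge form $\omega$. After rescaling I may assume $[\omega]=c_1(L)$ and choose the metric $h$ on $L$ with $\Theta(L,h)=\omega$, so that the local weights of $h$ equal $\varphi$ up to a local pluriharmonic term on a neighborhood of $K$. Fix $z\in X\setminus K$. First I would exploit that $K=\{\varphi\le 0\}$ and that $\varphi$ is a local $\omega$-potential near $K$ to separate $z$ from $K$ by the $\omega_H$-extremal function of $K$: this yields $k_0,s_0$ with $|s_0(z)|_{h^{k_0}}>\sup_K|s_0|_{h^{k_0}}$. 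I would then upgrade this inequality to a genuinely vanishing section by an Ohsawa--Takegoshi/Hörmander $L^2$ argument. Starting from a generic $\sigma\in H^0(L^{\otimes k})$ with $\inf_K|\sigma|_{h^k}>0$, I solve a $\bar\partial$-problem with the curvature weight of $h^k$ and an added logarithmic pole at $z$, producing a correction that forces vanishing at $z$ while keeping its supremum over $K$ below $\inf_K|\sigma|_{h^k}$ for $k$ large. The resulting section has its zero divisor through $z$ and disjoint from $K$.

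For necessity (rational convexity $\Rightarrow$ extension), the goal is a Hodge form on $X$ equal to $dd^c\varphi$ near $K$. I would seek it as $\omega=\omega_H+dd^c\lambda$ for a smooth real $\lambda$, so that $[\omega]=[\omega_H]$ is automatically rational; near $K$ I set $\lambda=\varphi-g$, where $g$ is a local potential of $\omega_H$, so that $\omega=dd^c\varphi>0$ there. Everything then reduces to extending $\lambda$ over $X$ while preserving $\omega_H+dd^c\lambda>0$. This is where rational convexity enters: each $z\in X\setminus K$ lies on a positive divisor avoiding $K$, giving an $\omega_H$-psh function $\tfrac1k\log|s|_{h^k}$ that equals $-\infty$ at $z$ and is bounded below near $K$. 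Assembling a regularized supremum of these, equivalently regularizing the Duval--Sibony positive closed $(1,1)$-current supported off $K$ that rational convexity supplies, should yield a global $\omega_H$-psh $\lambda$ with the prescribed behavior near $K$ and $\omega_H+dd^c\lambda\ge 0$; strict positivity is then recovered by adding a small multiple of $\omega_H$ supported away from $K$.

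I expect the main obstacle to be exactly this construction in the necessity direction: producing a \emph{smooth} Kähler form, rather than merely a positive closed current, that simultaneously equals $dd^c\varphi$ near $K$, is everywhere positive, and carries a rational class. The tension is that Demailly-type regularization of the Duval--Sibony current introduces a negative error controlled by Lelong numbers, which conflicts with the rigid boundary data near $K$. Reconciling these will require using the strict plurisubharmonicity of $\varphi$ to absorb the regularization error in a neighborhood of $K$, a partition-of-unity gluing across the transition region, and a final openness/density argument to move the cohomology class to a rational one by a perturbation supported away from $K$ (which, being $dd^c$-exact near $K$, leaves the extension condition intact). The sufficiency direction, by comparison, is a careful bookkeeping of the $L^2$-estimates once the extremal-function separation is established.
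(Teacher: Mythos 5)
Your proposal reproduces the paper's coarse architecture (H\"ormander $L^2$ separation for sufficiency; currents attached to divisors plus Richberg-type regularization for necessity), but in each direction the load-bearing step is either missing or would fail. In the sufficiency direction, the opening move --- that the $\omega_H$-extremal function of $K$ separates $z$ from $K$, yielding $k_0$ and $s_0$ with $|s_0(z)|_{h^{k_0}}>\sup_K|s_0|_{h^{k_0}}$ --- is not justified by anything in the hypotheses; establishing such separation from the extension property is essentially the content of the theorem, so this step begs the question. What the hypothesis actually provides is only local data near $K$, and the real work, which your phrase ``up to a local pluriharmonic term'' hides, is global on a neighborhood of $K$: one must first pass to a Stein sublevel set $D_\eps\supset K$, trivialize $L^M|_{D_\eps}$ (this needs an argument killing torsion in $H^2(D_\eps,\mathbb{Z})$), and then convert the pluriharmonic function $h=\psi-M\varphi$ into $\log|s|$ for a nonvanishing $s\in\mathcal{O}^*(D_\eps)$. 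That conversion is obstructed by the periods of $h$ on $H_1(D_\eps,\mathbb{Z})$, and the paper removes the obstruction by perturbing $h$ (and the metric) by small pluriharmonic functions with rational periods, see \eqref{E:Phi}--\eqref{E:lambda}. Only after $K$ is exhibited as a superlevel set $\{\Vert s\Vert\geq 1\}$ does the $L^2$ step (Lemma~\ref{L:guedj}) close: the $\bar\partial$-datum is supported where $\Vert s\Vert\leq\alpha<1$, so after raising to the $M$-th power the correction is $O\big((\alpha e^{\beta})^{M}\big)$ on $K$, while $\Vert s^M\Vert\geq 1$ there. Your substitute --- a generic $\sigma$ with $\inf_K|\sigma|_{h^k}>0$, corrected to vanish at $z$ --- has no such gap between the size of $\sigma$ on $K$ and its size on the support of the cutoff's $\bar\partial$, so the correction cannot be shown to be small relative to $\sigma$ on $K$, and this version of the argument fails. (A smaller point: ``after rescaling $[\omega]=c_1(L)$'' is false for a pre-fixed $L$; you must instead take $L$ to be the positive bundle with $c_1(L)=[\omega]$.)

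In the necessity direction, the ansatz $\omega=\omega_H+dd^c\lambda$ is wrong in general. Any extension agreeing with $dd^c\varphi$ on a neighborhood $V$ of $K$ is exact on $V$, so its class must lie in the kernel of $H^2(X,\mathbb{R})\to H^2(V,\mathbb{R})$, and there is no reason for your pre-fixed $[\omega_H]$ to lie in that kernel. For instance, if $K$ is a small tube $\{\varphi\leq 0\}$ around the anti-diagonal sphere $S\subset\mathbb{CP}^1\times\mathbb{CP}^1$ (Lagrangian for the $(1,1)$-polarization, hence rationally convex), then the extension class must pair to zero with $[S]=(1,-1)$, so it cannot equal a class such as $(2,1)$; correspondingly, a potential $g$ of $\omega_H$ on a full neighborhood of $K$ --- which your definition $\lambda=\varphi-g$ requires --- exists only when $[\omega_H]|_V=0$, so $\lambda$ is not even well defined near $K$ in general. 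The class of the extension cannot be prescribed in advance: it must be manufactured from the rational-convexity data itself, namely from the currents $T=dd^c\max\{\log|s|,G\}$ attached to positive divisors avoiding $K$, which vanish identically near $K$ and hence have classes restricting to zero there. Summing finitely many such currents, regularizing by a Richberg-type lemma that leaves the function unchanged where it is already smooth (Lemma~\ref{L:reg}; the Lelong-number loss you worry about does not occur because these currents have continuous potentials), and gluing with $dd^c(\chi\varphi)$ using explicit constants produces the Hodge form with class a multiple of $[\omega_\eps]$, as in Proposition~\ref{P:Guedj2.7} and the paper's proof. So your instinct about where the difficulty lies (smoothing versus the rigid boundary data) is right, but the fix is not a perturbation of a fixed class; it is that the class itself comes from the divisors.
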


\begin{theorem}\label{T:stein}
Let $X$ be a Stein manifold and $\varphi$ be a smooth strictly plurisubharmonic function on an open set $U\subseteq X$. The compact set $K=\{z\in U\,:\,\varphi(z)\leq 0\}$ is convex with respect to (principal) hypersurfaces if and only if $dd^c\varphi$ extends off a neighborhood of $K$ to a (trivial) Hodge form on $X$.
\end{theorem}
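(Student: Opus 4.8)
The plan is to prove Theorem~\ref{T:stein} by adapting the strategy behind the projective case (Theorem~\ref{T:main}) to the Stein setting, where the extra flexibility of holomorphic functions replaces the rigidity of positive divisors on a projective manifold. Let me think about what each direction requires and where the Stein-specific work concentrates.

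Let me think about the structure here. In the projective case, rational convexity is equivalent to the existence of a Hodge form extending $dd^c\varphi$. In the Stein case, I want convexity with respect to (principal) hypersurfaces to correspond to a (trivial) Hodge form — i.e., a Kähler form whose cohomology class is integral (Hodge) and, in the "principal" refinement, trivial in $H^2(X;\mathbb{Z})$ (or exact). So the parenthetical adjectives must be matched up carefully throughout.

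The plan is to prove the biconditional in both its general and parenthetical (principal/trivial) forms at once, by exploiting the dictionary on a Stein manifold between positive $(1,1)$-classes and holomorphic line bundles. The key structural fact is that on a Stein $X$ the exponential sheaf sequence together with the vanishing $H^1(X,\hol)=H^2(X,\hol)=0$ gives $\op{Pic}(X)\cong H^2(X;\Z)$, so a Hodge form (a K\"ahler form whose de Rham class lies in the image of $H^2(X;\Z)$) is exactly the curvature class of a holomorphic line bundle $L$, while a \emph{trivial} Hodge form corresponds to $L$ being holomorphically trivial, equivalently to $\omega$ being globally $dd^c$ of a smooth (necessarily strictly plurisubharmonic) function. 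Under this dictionary, sections of $L$ cut out positive divisors and sections of the trivial bundle are entire functions cutting out principal hypersurfaces; this is precisely what pairs ``(principal) hypersurfaces'' with ``(trivial) Hodge form.''

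For the direction ``extension $\Rightarrow$ convexity,'' suppose $dd^c\varphi$ extends to a Hodge form $\omega$. I would first realize $[\omega]=c_1(L)$ for a holomorphic line bundle $L$ via the isomorphism above, and then equip $L$ with a smooth Hermitian metric $h$ whose Chern curvature is exactly $\omega$; this amounts to solving $dd^c u=\omega-\Theta_{h_0}$ for a global function $u$, which is possible because the relevant Dolbeault obstruction vanishes on a Stein manifold. The pair $(L,h)$ is then positive. Given $p\in X\setminus K$, I would produce a holomorphic section $s$ of a suitable power $L^{\otimes m}$ with $s(p)=0$ but $s|_K\neq 0$ by H\"ormander / Ohsawa--Takegoshi $L^2$ estimates, using a weight that combines $m\,h$ with a large multiple of $\varphi$; since $K=\{\varphi\le 0\}$ and $\varphi$ is strictly plurisubharmonic, $\varphi$ serves as the barrier separating $p$ from $K$ and forces the $L^2$-corrected section to remain nonzero on $K$ while vanishing at $p$. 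The zero set $\{s=0\}$ is then a positive divisor through $p$ avoiding $K$, so $X\setminus K$ is a union of positive divisors. If $[\omega]=0$ then $L$ is trivial, the sections are entire functions, and the divisors are principal, yielding the parenthetical statement.

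For the converse, ``convexity $\Rightarrow$ extension,'' I would follow the current-theoretic mechanism of Duval--Sibony and Nemirovski, adapting it to $X$ and, where convenient, reducing to the already-proved projective case (Theorem~\ref{T:main}) via an exhaustion of $X$ by sublevel sets of a strictly plurisubharmonic exhaustion function. Each $p\in X\setminus K$ lies on a positive divisor $D_p$ avoiding $K$, whose current of integration $[D_p]$ is a positive closed $(1,1)$-current with integral class supported off $K$. The goal is to assemble these with the germ $dd^c\varphi$ near $K$ into a single positive closed $(1,1)$-current on $X$ that equals $dd^c\varphi$ in a neighborhood of $K$ and has integral class, and then to regularize it (Demailly-type smoothing, with room to spare since $X$ is Stein) to a genuine K\"ahler form without altering either its germ near $K$ or its cohomology class. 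The gluing would be performed at the level of (quasi-)plurisubharmonic potentials through a regularized maximum, matching $\varphi$ inside and the divisor potentials outside.

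The main obstacle is precisely this globalization step in the converse direction: producing a global positive object that \emph{simultaneously} restricts to the prescribed germ $dd^c\varphi$ near $K$, stays strictly positive across the transition region, and carries a controlled (integral, and in the principal case trivial) cohomology class. Integrality should come for free from building the class out of divisor classes in $\op{Pic}(X)\cong H^2(X;\Z)$, and triviality in the principal case from the associated line bundle being trivial; the delicate part is reconciling the \emph{exact} local germ with the \emph{global} class while preserving positivity through the smoothing. The ``extension $\Rightarrow$ convexity'' direction, by contrast, is the more routine one, its only real technical content being the uniform $L^2$ construction of sections separating $p$ from $K$.
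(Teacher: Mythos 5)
Your sketch of the direction ``convexity $\Rightarrow$ extension'' matches the paper's architecture (separating divisors give positive closed $(1,1)$-currents with continuous potentials $\max\{\log|s|,G\}$ vanishing near $K$; a finite sum by compactness; a Richberg/Demailly--B{\l}ocki--Ko{\l}odziej regularization that leaves the potential untouched near $K$; then a combination with a cutoff of $\varphi$ --- though the paper combines \emph{additively}, $\wt\omega = dd^c(\chi\varphi)+2M\omega_\eps$ with $\omega_\eps\equiv 0$ near $K$, rather than by a regularized maximum matching $\varphi$ against divisor potentials, which would not even be well defined when the potentials are metrics of different line bundles). The genuine gap is in the direction you dismiss as routine. ``Extension $\Rightarrow$ convexity'' is where the paper's specific work lies, and an $L^2$ estimate alone does not give it. Concretely: take $(L,\sigma)$ positive with $dd^c\sigma=\omega$. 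On a sublevel neighborhood $D_\eps\Supset K$ one has $\omega=dd^c\varphi$, so $c_1(L|_{D_\eps})$ is torsion and some power $L^M|_{D_\eps}$ is trivial; in a trivialization the metric $\psi=M\sigma$ differs from $M\varphi$ by a pluriharmonic function $h=\psi-M\varphi$ on $D_\eps$. The H\"ormander correction scheme ($S=\chi s_0^N-u$, with $u$ small on $K$ in the weighted norm) requires a \emph{local holomorphic section} $s_0$ whose norm cuts out $K$, i.e.\ $K=\{\|s_0\|\ge 1\}$; this forces $|s_0|=e^{Nh}$ near $K$, and such an $s_0\in\mathcal{O}^*(D_\eps)$ exists only when the periods of (a multiple of) $h$ over $H_1(D_\eps,\mathbb{Z})$ are integral --- false in general. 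Without $s_0$ there is nothing against which to compare the correction term $u$ on $K$, so ``$\varphi$ serves as the barrier'' does not by itself force non-vanishing on $K$. Nor can one cheat by modifying the metric to equal $M\varphi$ on the nose near $K$: cutting off $h$ produces a term $dd^c(\chi h)$ of uncontrolled sign in the transition annulus, and taking tensor powers does not help since the defect scales linearly with the power.

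The paper overcomes exactly this obstruction with a perturbation argument you are missing: using Guedj's Lemma~1.4, it chooses pluriharmonic functions $h_1,\dots,h_p$ whose images under $\Phi:PH(D_\eps)\to H^1(D_\eps,\mathbb{R})$ form a dual basis to $H_1(D_\eps,\mathbb{Z})$, perturbs the metric to $\wt\psi=\psi+\chi(\lambda_1h_1+\cdots+\lambda_ph_p)$ with $(\lambda_j)$ small and chosen from the dense set making all periods of $h+\sum\lambda_jh_j$ \emph{rational}, and then clears denominators to get $N$ and $s\in\mathcal{O}^*(D_\eps)$ with $|s|=e^{N(h+\sum\lambda_jh_j)}$, whence $K=\{\|s\|_{N\wt\psi}\ge 1\}$ and the $L^2$ lemma (Lemma~\ref{L:guedj}) applies. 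This period/integrality step is the crux of the forward direction (it is the same subtlety present in Nemirovski's original proof in $\mathbb{C}^n$), and your proposal inverts the difficulty assessment by treating it as a standard weighted-$\bar\partial$ exercise. A side remark: your suggested fallback of ``reducing to the projective case via an exhaustion of $X$ by sublevel sets'' cannot work as stated, since those sublevel sets are Stein, not projective.
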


The assumption in the above theorems that $K$ is a sublevel set of a strictly plurisubharmonic function may seem quite strong. However, it can be used to give a characterization of {\it all}
compact sets that are convex with respect to (principal) hypersurfaces, as our next result shows.

\begin{theorem}\label{T:char}
Let $X$ be a Stein manifold, and $K\subset X$ be a compact set.
Then $K$ is convex with respect to (principal) hypersurfaces if and only if there exists a neighborhood basis of $K$ such that every element of the basis is of the form  $\Omega=\{\rho < 0\}$, where $\rho$ is a strictly plurisubharmonic function in a neighborhood of $\overline \Omega$, and $dd^c \rho$ extends off a neighborhood of $\overline\Om$ to a (trivial) Hodge form on $X$. The same characterization holds for rational convexity of proper compact subsets of a projective manifold.
\end{theorem}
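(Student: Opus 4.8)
The plan is to reduce Theorem~\ref{T:char} to Theorem~\ref{T:stein} (and, in the projective case, to Theorem~\ref{T:main}), proving the two implications separately; throughout I may assume the basis elements are relatively compact.

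For the ``if'' direction, suppose $K$ admits a basis of sets $\Omega=\{\rho<0\}$ of the stated form. Given $p\in X\setminus K$, since the $\{\rho<0\}$ form a neighborhood basis I first choose one with $p\notin\Omega$, so that $\rho(p)\ge0$. Because $K\Subset\Omega$ is compact and $\rho<0$ on $K$, the number $\delta:=-\tfrac12\max_K\rho$ is positive and $K\subseteq\{\rho\le-\delta\}\subseteq\Omega$. Now $\{\rho\le-\delta\}$ is a compact sublevel set of the strictly plurisubharmonic function $\rho+\delta$, and since $dd^c(\rho+\delta)=dd^c\rho$ agrees near $\{\rho\le-\delta\}$ with a (trivial) Hodge form on $X$, Theorem~\ref{T:stein} shows that $\{\rho\le-\delta\}$ is convex with respect to (principal) hypersurfaces. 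As $\rho(p)\ge0>-\delta$ we have $p\notin\{\rho\le-\delta\}$, so there is a (principal) hypersurface through $p$ avoiding $\{\rho\le-\delta\}\supseteq K$. Hence $K$ is convex with respect to (principal) hypersurfaces.

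For the ``only if'' direction the key reduction is this: it suffices, for an arbitrary open $V\supseteq K$ with $\overline V$ compact, to build a function $\rho$, strictly plurisubharmonic near $\overline{\{\rho<0\}}$, with $K\subseteq\{\rho<0\}\subseteq\{\rho\le0\}\subseteq V$ and with $\{\rho\le0\}$ itself convex with respect to (principal) hypersurfaces. Granting this, the forward implication of Theorem~\ref{T:stein} applied to $\{\rho\le0\}$ produces the (trivial) Hodge extension of $dd^c\rho$, and the resulting sets $\{\rho<0\}$ form the required neighborhood basis. To construct $\rho$, for each $q\in\partial V$ I take $f_q\in\mathcal{O}(X)$ with $f_q(q)=0$ and $f_q\neq0$ on $K$, and set $c_q:=\tfrac12\min_K|f_q|>0$; the function $\log c_q-\log|f_q|$ is pluriharmonic off $\{f_q=0\}$, is negative on $K$, and is positive near $q$. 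Covering the compact set $\partial V$ by finitely many such neighborhoods, indexed by $q_1,\dots,q_N$, I set $u:=\max_j(\log c_{q_j}-\log|f_{q_j}|)$, which is plurisubharmonic and finite on $X\setminus\bigcup_j\{f_{q_j}=0\}$, is negative on $K$, and is positive on $\partial V$; thus the compact piece $L$ of $\{u\le0\}$ that meets $V$ satisfies $K\subseteq L\subseteq V$ and is cut out by (the bounded part of) an analytic polyhedron. Discarding the unbounded part of $\{u\le0\}$ by replacing $u$ with $\max(u,\lambda(\psi-M))$ for a smooth strictly plurisubharmonic exhaustion $\psi$ of $X$ and suitable $\lambda,M$, then regularizing the maximum and adding a small multiple of $\psi$ to gain strict plurisubharmonicity, yields the desired $\rho$.

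The hard part is this construction, and specifically keeping three requirements compatible: $\rho$ must be genuinely strictly plurisubharmonic and finite across the polar singularities of the $-\log|f_{q_j}|$; its relevant sublevel set must be trapped strictly inside $V$; and that sublevel set must remain convex with respect to (principal) hypersurfaces so that Theorem~\ref{T:stein} can be invoked. The delicate point is isolating the compact component $L$ near $K$ from the unbounded part of the polyhedron without spoiling hypersurface-convexity, which is precisely where the Stein exhaustion $\psi$ is used. The projective statement is proved in the same way, with positive divisors through exterior points replacing principal hypersurfaces, sections of a positive line bundle (and the $-\log$ of their pointwise norm) replacing the $f_q$ and $-\log|f_q|$, the hypothesis $K\subsetneq X$ ensuring that exterior points exist, and Theorem~\ref{T:main} replacing Theorem~\ref{T:stein} to yield a genuine Hodge form; compactness of $X$ makes the exhaustion step unnecessary.
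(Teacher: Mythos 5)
Your ``if'' direction is correct, and is essentially the paper's (made more explicit by your $\delta$-trick of applying Theorem~\ref{T:stein} to $\{\rho\le-\delta\}$). The ``only if'' direction, however, has a genuine gap, and it sits exactly at the point you flag as ``the hard part'' but never actually resolve. Your plan requires the compact set $\{\rho\le 0\}$ to be convex with respect to (principal) hypersurfaces \emph{before} you can invoke the forward implication of Theorem~\ref{T:stein}, and no step of your construction delivers this. The set $\{u\le 0\}=\{z:|f_{q_j}(z)|\ge c_{q_j}\ \forall j\}$ splits as $L\sqcup F$ with $L\subseteq V$ compact and $F$ closed, disjoint from $\overline V$; every point \emph{outside} $\{u\le0\}$ is separated from it by a level hypersurface $\{f_{q_j}=f_{q_j}(p)\}$, but a point $p\in F$ satisfies $|f_{q_j}(p)|\ge c_{q_j}$ for all $j$, and such hypersurfaces may well meet $L$. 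Knowing that $K$ itself is hypersurface convex only separates points from $K$, not from the thickened set $L$. Your proposed fix --- replacing $u$ by $\max(u,\lambda(\psi-M))$ --- does not repair this: first, it need not even keep the sublevel set inside $V$, since $F$ can meet \emph{every} sublevel set $\{\psi\le M\}$ large enough to contain $\overline V$ (unless $V$ happens to be a sublevel set of $\psi$); second, even where it does excise $F$, truncation by an exhaustion produces no hypersurfaces through the excised points avoiding $L$, so convexity of $\{\rho\le0\}$ is still unproven. Separating a union of connected components of a convex compact is precisely the content of the paper's Lemma~\ref{L:conncomp} (the analogue of Stout's Corollary~1.5.4, proved via Oka--Weil approximation), used together with the fact that meromorphic hulls of compacts in a Stein manifold are compact; some such ingredient is unavoidable, and it is entirely absent from your argument.

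Two further points of comparison. First, the paper avoids your stronger requirement altogether: it never proves that the strongly pseudoconvex sublevel set $D=\{\rho<c\}$ is itself convex. Instead it shows (via Lemma~\ref{L:conncomp}) that the closure of a compact analytic polyhedron $\Om''\supset \overline D$ cut out by \emph{meromorphic} functions is meromorphically convex, then re-runs the converse argument of Theorem~\ref{T:stein} by hand: Proposition~\ref{P:Guedj2.7} applied to $\overline{\Om''}$ gives a form vanishing on a neighborhood of $\overline{\Om''}\supset\overline D$, which glues with $dd^c(\chi\rho)$ to the desired Hodge form agreeing with $dd^c\rho$ near $\overline D$. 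Second, your use of entire functions $f_q$ is only legitimate in the principal case; for plain hypersurface convexity the separating hypersurfaces are zero sets of sections of line bundles, and one must work (as the paper does) with meromorphic functions $R_j$ and polyhedra $\{|R_j|<\Vert R_j\Vert_K+\eps'\}$, with poles handled accordingly. Your projective sketch inherits the same two defects.
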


The connection between rational convexity and K\"ahler forms was already established by Duval--Sibony~\cite{DuSi95}. The above characterization in $\Cn$ was mentioned as a remark by Nemirovski~\cite{Ne08}. The crux of the matter is that a rationally convex set admits a fundamental system of Stein neighborhoods, each of which has rationally convex closure in the ambient space. This can then be combined with Theorems~\ref{T:main} and~\ref{T:stein} to, in fact, produce a fundamental system of strongly pseudoconvex neighborhoods with rationally convex closures.


Historically, the interest in rational convexity stems from an approximation result known as the Oka--Weil theorem. We have the following version of this theorem for projective manifolds. {\color{black} In the case of $\mathbb{CP}^n$ we recover a result of Hirschowitz~\cite[Theorem 5]{Hi71}}. For Oka--Weil-type theorems in Stein manifolds, see \cite{Ro61}, \cite{Hi71} and \cite{BS}.

\begin{theorem}\label{T:OkaWeil}
Let $X$ be a complex projective manifold and $K \subset X$ be a rationally convex compact set. Then every holomorphic function in a neighborhood of $K$ is the uniform limit on $K$ of a sequence of meromorphic functions on $X$ of the form $f/g$, where $f,g$ are global holomorphic sections of a positve line bundle on $X$, with poles off $K$.
\end{theorem}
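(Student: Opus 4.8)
The plan is to reduce the statement to the classical Oka--Weil theorem for rationally convex compacta in $\mathbb C^N$ by passing to the complement of an ample divisor. Since $K$ is rationally convex, choosing any point $p_0\in X\setminus K$ produces a positive divisor $D$ through $p_0$ that avoids $K$; write $D=\{t=0\}$ for a holomorphic section $t$ of the associated positive line bundle $L$. Because $L$ is ample, Goodman's theorem guarantees that $U:=X\setminus D$ is an affine (in particular Stein) manifold containing $K$, and that its ring of regular functions is exactly $\bigcup_{m\ge 0}t^{-m}H^0(X,L^m)$. For $m$ large these regular functions generate $\mathcal O(U)$, so a suitable finite collection $\sigma_1/t^m,\dots,\sigma_N/t^m$ with $\sigma_j\in H^0(X,L^m)$ realizes a closed embedding $\iota:U\hookrightarrow\mathbb C^N$ under which the regular functions on $U$ are precisely the restrictions of polynomials.

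Next I would transfer rational convexity to $K':=\iota(K)$. Given $w\in\mathbb C^N\setminus K'$, there are two cases. If $w\notin\iota(U)$, then since $\iota(U)$ is a closed subvariety there is a polynomial $Q$ vanishing on $\iota(U)$ with $Q(w)\neq0$, and $Q-Q(w)$ vanishes at $w$ while equalling $-Q(w)\neq0$ on $K'$. If $w=\iota(p)$ with $p\in U\setminus K$, then rational convexity of $K$ in $X$ supplies a positive divisor $D_p$ through $p$ avoiding $K$; by Chow's theorem $D_p$ is algebraic, so $E:=D_p\cap U$ is an algebraic hypersurface of $U$ containing $p$ and disjoint from $K$. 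For each $x\in K$ the ideal of $E$ in $\mathcal O(U)$ is not contained in the maximal ideal at $x$, whence there is a regular function vanishing on $E$ but not at $x$; covering $K$ by finitely many such and taking a generic linear combination produces a single regular $g$ with $g(p)=0$ and $g\neq0$ on $K$. Transported through $\iota$, $g$ is a polynomial defining a hypersurface through $w$ that avoids $K'$. Hence $K'$ is rationally convex in $\mathbb C^N$.

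Finally I would invoke the classical Oka--Weil theorem for rationally convex sets in $\mathbb C^N$. As $\iota$ is a biholomorphism onto the closed submanifold $\iota(U)$, the given $f$ yields a holomorphic function on a neighborhood of $K'$ in $\iota(U)$, which extends to an ambient neighborhood in $\mathbb C^N$; the theorem then gives rational functions $P_k/Q_k$ with $Q_k$ nonvanishing on $K'$ and $P_k/Q_k\to f\circ\iota^{-1}$ uniformly on $K'$. Restricting to $\iota(U)$ and pulling back, $P_k|_U=a_k/t^{d_k}$ and $Q_k|_U=b_k/t^{e_k}$ with $a_k\in H^0(X,L^{d_k})$ and $b_k\in H^0(X,L^{e_k})$, so that $P_k/Q_k=(a_kt^{e_k})/(b_kt^{d_k})$ is a quotient of two global sections of the positive line bundle $L^{d_k+e_k}$. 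Since $Q_k\neq0$ on $K'$ and $t\neq0$ on $K$, the denominator $b_kt^{d_k}$ is nonvanishing on $K$, so the poles lie off $K$ and the sequence converges uniformly to $f$ on $K$, which is the desired conclusion.

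I expect the second step to be the main obstacle: rational convexity in $X$ supplies, for each point, only a positive divisor whose line bundle varies with the point, and these must be converted into honest polynomials on the affine chart. The resolution rests on Chow's theorem, so that the divisors are algebraic, together with the section-ring description of $\mathcal O(U)$; the passage from ``a hypersurface avoiding $K$'' to ``a single regular function nonvanishing on $K$'' then requires the compactness-plus-genericity argument above, which is the point demanding care. A secondary technical matter is verifying that $\iota$ is a closed embedding and that the section-ring description of $\mathcal O(U)$ holds, both of which are standard for complements of ample divisors.
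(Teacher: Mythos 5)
Your reduction to a single affine chart $U=X\setminus D$ breaks down at the step where you transfer rational convexity to $\iota(K)\subset\mathbb{C}^N$, and the problem is twofold. First, the ``generic linear combination'' argument is not valid: if $g_1,\dots,g_r\in I(E)$ have no common zero on $K$, the set of bad coefficient vectors is $\bigcup_{x\in K}\{c:\sum_i c_ig_i(x)=0\}$, a union of complex hyperplanes parametrized by $K$. Each hyperplane has real codimension $2$, so as soon as $\dim_{\mathbb{R}}K\geq 2$ this union can fill the whole coefficient space; already for $r=2$ the bad set is the image of $K$ under $x\mapsto[-g_2(x):g_1(x)]\in\mathbb{CP}^1$, which can be surjective. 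Genericity alone cannot rescue this. Second, and more fundamentally, the claim you need is much stronger than it looks: since the regular functions on $U$ are exactly $\bigcup_m t^{-m}H^0(X,L^m)$, a regular function vanishing at $p$ and nonvanishing on $K$ is the same thing as a section of a power of the \emph{fixed} bundle $L$ vanishing at $p$ and nonvanishing on $K$. Rational convexity only supplies sections of bundles $L_p$ that vary with $p$, and there is in general no way to trade a divisor in the class $c_1(L_p)$ for one in a class $m\,c_1(L)$ while keeping it off $K$: the two classes need not be commensurable (no relation $L_p^{\,i}\cong L^{\,j}$), and there are genuine topological obstructions. For instance, the anti-diagonal $\overline{\Delta}\subset\mathbb{CP}^1\times\mathbb{CP}^1$ is rationally convex (it is totally real and Lagrangian for the Hodge form of class $c_1(\mathcal{O}(1,1))$, so Guedj's extension of the Duval--Sibony theorem applies), yet \emph{every} holomorphic section of \emph{every} power of the ample bundle $\mathcal{O}(2,1)$ vanishes somewhere on $\overline{\Delta}$, because $c_1(\mathcal{O}(2k,k))$ pairs nontrivially with $[\overline{\Delta}]$. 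Thus whether the powers of one fixed ample bundle separate points from $K$ depends delicately on the class of that bundle; proving it for your $L$ would require the H\"ormander-type machinery of Lemma~\ref{L:guedj} (or the full strength of Theorems~\ref{T:main} and~\ref{T:char}), not a general-position argument, and it is not even clear that an arbitrary initial divisor $D$ avoiding $K$ puts you in a ``good'' class.

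The paper's proof is structured precisely to avoid this issue: it never fixes one bundle or one affine chart. For each point of $X\setminus U$ it takes a hypersurface $h_i$ (zero set of a section of a positive bundle, possibly different for each point) avoiding $K$, uses compactness to extract finitely many with $\widehat{K}_{\mathcal{O}(X\setminus h_1)}\cap\dots\cap\widehat{K}_{\mathcal{O}(X\setminus h_k)}\subset U$, passes to the Stein manifold $X\setminus h^*$ with $h^*=h_1\cup\dots\cup h_k$ (whose defining section $s^*=s_1\cdots s_k$ lives in the tensor product bundle), applies the classical Oka--Weil theorem for holomorphically convex hulls there --- so no transfer-of-rational-convexity problem ever arises --- and finally approximates elements of $\mathcal{O}(X\setminus h^*)$ by meromorphic functions with poles on $h^*$ via Kodaira embedding and Oka--Cartan extension (Lemma~\ref{l.mmm}). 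If you want to keep your strategy of reducing to classical results, that intersection-of-hulls device is the natural repair. A secondary, fixable point you gloss over: to extend $f\circ\iota^{-1}$ from a neighborhood of $K'$ \emph{in} $\iota(U)$ to an ambient neighborhood in $\mathbb{C}^N$, you need a Stein open set $\Omega_1\supset K'$ in $\mathbb{C}^N$ with $\Omega_1\cap\iota(U)$ contained in the domain of the function before Cartan's Theorem~B applies; an arbitrary neighborhood of a compact set in $\mathbb{C}^N$ need not contain a Stein one, so this too requires an argument.
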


Recently rational convexity has attracted attention in symplectic geometry and topology.  It was shown by Eliashberg~\cite{El90} and Eliashberg--Cieliebak~\cite{ElCe} that, for the closure $W$ of a smoothly bounded domain in $\mathbb C^n$, $n\ge 3$,  the following conditions are equivalent.
\begin{itemize}
\item [$(a)$] $W$ admits a defining Morse function having no critical points of index greater than $n$.
\item [$(b)$] $W$ is smoothly isotopic to the closure of a strongly pseudoconvex domain in $\Cn$.
\item [$(c)$] $W$ is smoothly isotopic to a ``rationally convex domain", i.e., a rationally convex set which is the closure of a strongly pseudoconvex domain in $\Cn$.
\item [$(d)$] $W$ is isotopic to a Weinstein domain symplectically embedded in $(\mathbb R^{2n},\omega_{std})$.
\end{itemize}
When $n=2$, the question is more delicate as demonstrated by  Nemirovski and Siegel~\cite{NeSi16} who produce examples of disk bundles over surfaces that admit embeddings in $\CC$ as closures of strongly pseudoconvex domains, but not as rationally convex domains. In \cite{Go13}, Gompf considered the case of domains with trivial topology, and produced examples of homology spheres that embed in $\CC$ as boundaries of contractible strongly pseudoconvex domains. He conjectured that no Brieskorn homology sphere is realizable as the boundary of a strongly pseudoconvex domain in $\CC$. While Gompf's conjecture is still open, Mark and Tosun~\cite{MaTo} show that no Brieskorn homology sphere is orientation-preserving diffeomorphic to the boundary of a rationally convex domain in $\CC$. Their proof goes via the the following observation. By Nemirovski's aforementioned theorem, the boundary of a smooth rationally convex domain is a hypersurface of contact type with respect to the symplectic form $\omega$ granted by the theorem. However, $\om$ can be chosen to be $\omega_{std}$ outside a large ball, and thus, by a result of Gromov, $\omega$ is symplectomorphic to $\omega_{std}$. It follows that the boundary of a smooth rationally convex domain in $\mathbb C^2$ is diffeomorphic to a hypersurface of contact type in $(\mathbb R^4, \omega_{std})$. They then show that there are obstructions to realizing Brieskorn spheres as hypersurfaces of contact type in $(\mathbb R^4, \omega_{std})$. We expect that the results of this paper could be used to obtain further results in this direction.

\noindent {\bf Acknowledgments.} This paper was initiated at the Banff International Research Station (BIRS), Canada, at the conference {\em Interactions between Symplectic and Holomorphic Convexity in $4$ dimensions}. The authors are grateful to the organizers for inviting them. The second author is partially supported by the Infosys Young Investigator Award and the DST FIST Program - 2021 [TPN-700661]. The third author is partially supported by Natural Sciences and Engineering Research Council of Canada.

\section{Convexity on projective and Stein manifolds}\label{S:prelim1}

In analogy with $\Cn$, we say that a compact set $K$ in a complex manifold $X$ is {\em convex with respect to complex hypersurfaces}, or simply, {\em hypersurface convex} if for every $p\in X\setminus K$, there is a complex hypersurface (or effective divisor) that passes through $p$ and avoids $K$.

In the case when $X$ is a projective manifold, we follow ~Guedj~\cite{Gu99} and consider a stronger notion of convexity in this paper: a compact set $K$ in a projective manifold $X$ is said to be {\em rationally convex} if for any point $p$ in $X \setminus K$ there exists a positive
 divisor (i.e., the zero locus of a global holomorphic section of a positive line bundle on $X$) passing through $p$ and avoiding $K$. In general, the {\em rationally convex hull} of a compact set $K$ is the set
	\bes
		r(K)=\{z\in X:\text{every positive divisor passing through $z$ intersects $K$}\}.
	\ees
The advantage of working with convexity with respect to positive divisors is the availability of H\"ormander's $L^2$-methods. Using this notion, Guedj~\cite{Gu99} generalized several results of Duval and Sibony to projective manifolds, including the aforementioned characterization of rationally convex totally real submanifolds, and an approximation theorem for positive closed $(1,1)$-currents.

It is worth distinguishing the two notions of convexity described above. While all effective divisors are positive on $\mathbb{CP}^n$, there are projective manifolds on which rational convexity is a genuinely stronger notion. For example, the compact set $K = \mathbb C\mathbb P^1 \times \{p\}$ in $X=\mathbb C\mathbb P^1 \times \mathbb C\mathbb P^1$ is hypersurface convex, but any positive divisor on $X$ necessarily intersects $K$. In fact, the following is true: if $K$ is hypersurface convex on a projective manifold $X$, but not rationally convex, then the rationally convex hull of $K$ is all of $X$. Indeed, suppose $K$ is convex with respect to hypersurfaces and that there is a section $s_1$ of a positive line bundle $L_1$ whose zero set avoids $K$. Choose a point
$p\in X \setminus K$. There is a line bundle $L_2$ with a section $s_2$ that passes through $p$ and avoids $K$. Then consider the section $s=s_1^Ms_2$ for some positive integer $M$. This is a section of $L_1^M\otimes L_2$ whose zero set passes through $p$ and still avoids $K$, and by choosing $M$ large enough the line bundle $L_1^M\otimes L_2$ can be made positive. This shows that $K$ is rationally convex, which proves our claim. Therefore, in
{\color{black} Theorems~\ref{T:main} and~\ref{T:OkaWeil}}
one may assume that $K$ is hypersurface convex, and the rationally convex hull is not all of $X$. We also note that, in functional terms, rational convexity in $X$ is stronger than convexity with respect to rational (or meromorphic) functions on $X$, as demonstrated by the example of $\mathbb C\mathbb P^1 \times \{p\}$ in $\mathbb C\mathbb P^1 \times \mathbb C\mathbb P^1$. See \cite[Lemma 2.2]{Gu99} for a functional description of rational convexity in projective manifolds.

Now, let $X$ be a Stein manifold. Since every effective divisor is positive on a Stein manifold, the two notions described above coincide, and we simply refer to it as hypersurface convexity in this case. (In Guedj~\cite{Gu99}, this notion still bears the name ``rational convexity".) However, owing to the presence of entire functions, one may define a stronger notion of convexity as follows: we say that $K$ is {\it convex with respect to principal hypersurfaces} if through every point in $X\setminus K$, there exists a principal hypersurface, i.e., the zero locus of an entire function, that avoids $K$. These definitions are inequivalent precisely when $\text{Hom}\left(H_2(X,\mathbb{Z}),\mathbb{Z}\right)\ne 0$, see Col{\c t}oiu~\cite{Col}. If $X$ is a properly embedded submanifold of $\mathbb C^N$, it can be shown (see Boudreaux--Shafikov~\cite{BS}) that a compact $K \subset X$ is convex with respect to principal hypersurfaces in $X$ if and only if $K$ is rationally convex in $\mathbb C^N$.

One may also consider convexity with respect to positive closed currents of bidegree $(1,1)$. For $X=\Cn$, Duval and Sibony show that this notion is equivalent to hypersurface convexity. An analogous statement also holds for a general Stein manifold $X$: a compact set $K\subseteq X$ is hypersurface convex if and only if for every $x\in X\setminus K$, there is a positive closed continuous current $T$ of bidegree $(1,1)$ with $[T]\in H^2(X,\Z)$ such that $x\in\mathrm{supp}\:T$ and $K\cap\mathrm{supp}\: T=\varnothing$. This follows from Proposition~\ref{P:Guedj2.7}(i) below and Theorem~5.7 in Guedj~\cite{Gu99}.

\section{Technical preliminaries}

Let $X$ be a complex manifold. We say that a cohomology class in $H^p(X,\rl)$, $p\geq 0$, is an {\em integral class} if it lies in the image of the morphism $H^p(X,\Z)\rightarrow H^p(X,\rl)$ induced by the containment $\Z\hookrightarrow\rl$. Given a closed $p$-form or $p$-current $T$, this is  abbreviated as $[T]\in H^p(X,\Z)$. A {\em Hodge form} is a K{\"a}hler form whose cohomology class is an integral class. A {\em trivial Hodge form} is a K{\"a}hler form whose cohomology class is trivial.

For the proof of Theorems \ref{T:main} and \ref{T:stein} we will need some technical results that we collect in this section. Although the proof techniques are already present in \cite{DuSi95} and \cite{Gu99}, we provide the proofs here for the sake of completeness.

\begin{lemma}\label{L:guedj} Let $V\subset X$ be an open set, where $X$ is either a projective manifold or a Stein manifold. Let $L$ be a positive holomorphic line bundle on $X$, and $\varphi$ be a positive continuous metric of $L$ on $X$. Let $s$ be a holomorphic section of $L|_{V}$. Suppose $K=\{z\in V:\Vert s(z)\Vert_{\varphi}\geq 1\}$ is compact. Then, for every $a\notin K$, there is an integer $M>0$ and a global holomorphic section $S$ of $L^M$ such that $S(a)=0$ and $K\cap S^{-1}(0)=\varnothing$.
\end{lemma}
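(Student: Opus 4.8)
The plan is to solve a $\overline\partial$-problem governed by Hörmander's $L^2$-estimates, using two weights simultaneously: a singular weight at $a$ to force the constructed section to vanish there, and the curvature of a high tensor power $L^M$ to force the section to agree, up to an exponentially small error, with $s^M$ on a neighborhood of $K$. Since $\Vert s\Vert_\varphi\geq 1$ on $K$, the section $s^M$ is nonvanishing there, and the correction will be too small to create a zero. Concretely, because $a\notin K$ and $K$ is compact in the open set $V$, I would first fix a smooth cutoff $\chi$ that is $\equiv 1$ on a neighborhood $W'$ of $K$, is supported in a relatively compact neighborhood $W\Subset V$ with $a\notin\overline W$, and whose differential $\overline\partial\chi$ is supported in the ``collar'' $W\setminus W'$. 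As $K=\{\Vert s\Vert_\varphi\geq 1\}$, after shrinking the neighborhoods we may assume $\Vert s\Vert_\varphi\leq c_0<1$ on $\operatorname{supp}\overline\partial\chi$. The global smooth section $\sigma:=\chi\,s^M$ of $L^M$ then equals $s^M$ on $W'$, vanishes at $a$, and satisfies $\overline\partial\sigma=(\overline\partial\chi)\,s^M=:f$, a smooth $\overline\partial$-closed $L^M$-valued $(0,1)$-form supported in the collar, away from both $K$ and $a$.

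Next I would set up the weight. Let $\psi_a$ be a function on $X$ equal to $\log|w|^2$ (times a cutoff $\equiv 1$ near $a$) in a coordinate chart $w$ centered at $a$, and smooth elsewhere; its $dd^c$ is bounded below and carries a positive logarithmic singularity at $a$, normalized so that local $L^2$-finiteness against $e^{-\psi_a}$ forces vanishing at $a$. Since $\varphi$ is a positive metric, $dd^c(M\varphi)$ dominates the bounded negative part coming from the cutoff of $\psi_a$ once $M$ is large, so $M\varphi+\psi_a$ is strictly plurisubharmonic with a curvature lower bound of order $M$. Hörmander's theorem---in the projective case via the compactness of $X$ and positivity of $L$, and in the Stein case via the $L^2$ existence theory on Stein manifolds after adding a convex exhaustion term to guarantee completeness---then yields a solution $u$ of $\overline\partial u=f$ with
\[
\int_X \Vert u\Vert_\varphi^2\, e^{-\psi_a}\,dV \;\leq\; \frac{C}{M}\int_X \Vert f\Vert_\varphi^2\, e^{-\psi_a}\,dV .
\]
The right-hand side is finite, indeed $O(c_0^{2M})$, since $f$ is supported in the collar where $\psi_a$ is bounded and $\Vert s\Vert_\varphi\leq c_0$. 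Setting $S:=\sigma-u$ produces a holomorphic section of $L^M$ by elliptic regularity, and the singular weight forces the local $L^2$-finiteness of $\Vert S\Vert_\varphi^2 e^{-\psi_a}$ near $a$, hence $S(a)=0$.

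It remains to show that $S$ is nonvanishing on $K$. On $W'$ we have $\chi\equiv 1$, so $S=s^M-u$ with $u$ holomorphic there (as $f=0$ on $W'$). The $L^2$-bound gives $\Vert u\Vert_{L^2}=O(c_0^M/\sqrt{M})$, and the sub-mean-value inequality for holomorphic sections of $L^M$ over fixed-radius balls in $W'$---whose constant grows only polynomially in $M$, the weight being smooth there since $a\notin\overline{W'}$---upgrades this to $\sup_K\Vert u\Vert_\varphi\leq C\,M^{(n-1)/2}c_0^M\to 0$. Thus for $M$ large, $\sup_K\Vert u\Vert_\varphi<1\leq\inf_K\Vert s^M\Vert_\varphi$, so $\Vert S\Vert_\varphi\geq\Vert s^M\Vert_\varphi-\Vert u\Vert_\varphi>0$ on $K$, giving $K\cap S^{-1}(0)=\varnothing$ together with $S(a)=0$. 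I expect the last balancing act to be the main obstacle: one must ensure that the exponential decay $c_0^M$ of the error beats the polynomial growth of the mean-value constant so that the correction stays strictly below the lower bound of $\Vert s^M\Vert_\varphi$ on $K$, and one must verify that adjoining the singular weight (and, in the Stein case, the exhaustion term) preserves the curvature positivity required for the $L^2$-estimate.
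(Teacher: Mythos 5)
Your overall strategy coincides with the paper's: set $S=\chi s^M-u$, where $u$ solves $\bar\partial u=\bar\partial\chi\cdot s^M$ in weighted $L^2$ with a weight combining $M\varphi$ and a logarithmic singularity at $a$, then show $\sup_K|u|e^{-M\varphi}\to 0$ so that the correction cannot create zeros on $K$. However, two of your steps fail as written. First, the singular weight: with $\psi_a=\log|w|^2$ the density $e^{-\psi_a}=|w|^{-2}$ is locally \emph{integrable} in dimension $n\geq 2$ (in polar coordinates, $\int_0^1 r^{-2}\,r^{2n-1}\,dr<\infty$), so finiteness of $\int \Vert u\Vert^2e^{-\psi_a}$ imposes no condition at $a$ and you cannot conclude $u(a)=0$, hence not $S(a)=0$. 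One needs a pole of order $n$, i.e.\ $e^{-\psi_a}\sim|w|^{-2n}$; the paper takes $n\log\bigl(\sum_{j=1}^n|h_j|^2\bigr)$ with $h_1,\dots,h_n\in\mathcal{O}(X)$ cutting out $\{a\}$, which is moreover plurisubharmonic globally, so no cutoff (and no negativity to absorb) is needed. Your hedge ``normalized so that $L^2$-finiteness forces vanishing'' points in the right direction, but the formula you wrote does not satisfy it.

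Second, the sub-mean-value step is quantitatively wrong. Over a ball of \emph{fixed} radius $r$, passing from $|u(y)|^2\leq Cr^{-2n}\int_{B(y,r)}|u|^2\,dV$ to a bound on $|u(y)|e^{-M\varphi(y)}$ in terms of $\int_{B(y,r)}|u|^2e^{-2M\varphi}\,dV$ costs a factor $e^{2M\operatorname{osc}_{B(y,r)}\varphi}$, which grows \emph{exponentially} in $M$; smoothness of the weight does not help, since its oscillation over a fixed ball is a fixed positive number multiplied by $M$ in the exponent. So your claimed bound $CM^{(n-1)/2}c_0^M$, and your framing of the ``balancing act'' as exponential decay versus polynomial growth, are both incorrect: the competition is exponential versus exponential. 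The argument still closes, but only by controlling the rate, which is exactly the paper's $\alpha e^{\beta}<1$ device: fix $\beta>0$ with $c_0e^{\beta}<1$, then choose $r$ small (independent of $M$) so that the full weight oscillates by less than $M\beta$ on $B(y,2r)$, yielding $\sup_K|u|e^{-M\varphi}\lesssim (c_0e^{\beta})^{M}\to 0$. (Balls of radius $\sim 1/M$ would indeed give a polynomial constant, but that is not what you wrote.) A lesser point: in the Stein case, Hörmander's estimate for $(0,1)$-forms with values in $L^M$ involves the Ricci curvature of the base metric, which your ``convex exhaustion term'' (a completeness device) does not address; the paper sidesteps this by treating the data as $(n,1)$-forms with values in $L^M\otimes K_X^*$ and building a metric of $K_X^*$ into the weight.
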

In the above statement, $L|_V$ denotes the pull-back bundle $\iota^*(L)$, where $\iota:V\hookrightarrow X$ is the inclusion map, and $\Vert s(z)\Vert_{\varphi}$ denotes $|s(z)|e^{-\varphi(z)}$.

\begin{proof}
When $X$ is projective, the result is proved in Guedj~\cite[Lemma~2.4]{Gu99}. We provide a similar argument for the case of Stein manifolds.

Suppose $X$ is an $n$-dimensional Stein manifold. Fix a K\"ahler form $\omega=dd^c\rho$, where $\rho$ is a strictly plurisubharmonic function on $X$. Fix a point $a\in X\setminus K$.  Using H{\"o}rmander's theorem for the $\bar\partial$-problem for bundle-valued forms (see \cite[Theorem 3.1]{De92b}), we will construct a global holomorphic section $S$ of $L^M$, for some integer $M>0$ to be determined later, such that $S(a)=0$ but $S$ is nonvanishing on $K$.

	Let $\chi\in\cont^\infty_0(X)$ be such that $\mathrm{supp}\:\chi\subseteq V\setminus\{a\}$, $0\leq\chi\leq 1$, and $\chi\equiv 1$ in a neighborhood of $K$. Set $v=\bar\partial\big(\chi s^M\big)=\bar\partial\chi\cdot s^M$. This is can be viewed as a smooth $\bar\partial$-closed $(0,1)$-form with values in $L^M$, or alternatively, as a smooth $\bar\partial$-closed $(n,1)$-form with values in $L^M\otimes K^*_X$, where $K^*_X$ is the dual of the canonical bundle $K_X$.

	Since $X$ is Stein, there exist $h_1,\ldots,h_n\in\mathcal{O}(X)$ such that $h_j(a)=0$ and $\bigcap_{j=1}^{n}\{h_j=0\}=\{a\}$. Let $\rho$ be a strictly plurisubharmonic function on $X$, and $\sigma$ be a smooth positive metric on $K^*_X$. Then $\psi=M\varphi+\sigma+n\log\left(\sum_{j=1}^{n}|h_j|^2\right)+\rho$
is a singular metric on $L^{M}\otimes K^*_X$ that is continuous on $X\setminus\{a\}$, has a logarithmic singularity of order $n$ at $a$, and satisfies $dd^c\psi\geq \om=dd^c\rho$. Since $v$ is compactly supported away from $a$, $v\in L^2_{(n,1)}(X,L^M\otimes K^*_X,e^{-\psi})$. Thus, by \cite[Theorem~3.1]{De92b}, there is an $(n,0)$-form $u$ with values in $L^M\otimes K_X^*$ such that $\bar\partial u=v$ and
\[
	\int_X|u|^2e^{-2\psi}\om^n\leq C\int_X|v|^2e^{-2\psi}\om^n,
\]
for some constant $C>0$. Since the integral on the right converges, it must be  that $u(a)=0$. Viewing $u$ as a section of $L^M$, we have that $S=\chi s^M-u$ is a holomorphic section of $L^M$ such that $S(a)=0$.

	We now show that for sufficiently large $M$, $S$ is nonvanishing on $K$. Since $\bar\partial\chi\equiv 0$ on a neighborhood of $K$, there is an $\alpha\in (0,1)$ such that $|s|e^{-\varphi}\leq\alpha<1$ on $\mathrm{supp }\:\bar\partial\chi$. Let $\beta>0$ such that $\alpha e^{\beta}<1$. For each $y\in K$, let $B(y,r)$ denote the pull-back under a coordinate chart of a Euclidean ball of radius $r$ centered at $y$. Choose $r$ small enough so that $a\notin B(y,2r)$, $B(y,2r)\subset X\setminus\text{supp }(\bar\partial\chi)$ for all $y\in K$, and $|\psi(y)-\psi(z)|<M\beta$ for all $z\in B(y,2r)$. Since $u$ is holomorphic on $B(y,2r)$ and $\psi-M\varphi$ is continuous on $\mathrm{supp}\:\bar\partial\chi$,
\begin{align*}
	|u(y)|^2\lesssim\int_{B(y,r)}|u|^2\om^n
	&\lesssim e^{2(\psi(y)+M\beta)}\int_{B(y,r)}|u|^2e^{-2\psi}\om^n\\
	&\lesssim e^{2M\varphi(y)}e^{2M\beta}
	\int_{\mathrm{supp}\:\bar\partial\chi}|v|^2e^{-2M\varphi}\om^n\\
	&\lesssim e^{2M\varphi(y)}e^{2M\beta}\int_{\mathrm{supp}\:\bar\partial\chi}
		|s^M|^2e^{-2M\varphi}\om^n\\
	&\lesssim  e^{2M\varphi(y)}\big(\alpha e^{\beta}\big)^{2M},
\end{align*}
where the omitted constants are independent of $y$ and $M$. Thus, $\sup_K|u(y)|e^{-M\varphi(y)}\rightarrow 0$ as $M\rightarrow\infty$. On the other hand, $|s|^{M}e^{-M\varphi}\geq 1$ on $K$. Thus, for large $M$, $S=\chi s^M-u$ is nonvanishing on $K$. \end{proof}

\begin{Prop}\label{P:Guedj2.7}
	Let $X$ be either a projective manifold equipped with a K{\"a}hler metric $\om$, or a Stein manifold equipped with a K{\"a}hler metric $\om=dd^c\color{black}\rho$ for some {\color{black} spsh} ${\color{black}{\rho}}\in\cont^\infty(X)$. Let $K\subset X$ be a compact that is rationally convex if $X$ is projective, or  hypersurface convex if $X$ is Stein.
\begin{enumerate}
\item [(i)] For every $z\notin K$, there exists a positive closed current $T$ of bidegree $(1,1)$ on $X$ which admits a continuous potential $\psi$ {\color{black} (i.e., $dd^c \psi = T$ in the sense of distributions for some continuous metric $\psi$ of some line bundle)}, is smooth and strictly positive at $z$, vanishes in a neighborhood of $K$, and $[T]\in H^2(X,\Z)$.  If $K$ is convex with respect to principal hypersurfaces, then $[T]=0$.
\item [(ii)] For every $\eps>0$ and relatively compact neighbourhood $V$ of $K$, there exists a smooth closed $(1,1)$-form $\om_\eps$ which satisfies the following properties:
    \begin{enumerate}
        \item [(a)] $\om_\eps\geq \om$ on $X\setminus V$,
        \item [(b)] $\om_\eps\equiv 0$ in a neighborhood of $K$,
        \item [(c)] $\om_\eps\geq -\eps\om$ in $V$,
        \item [(d)] $[\om_\eps]\in H^2(X,\Z)$. If $K$ is convex with respect to principal hypersurfaces, then $[\om_\eps]=0$.
    \end{enumerate}
\end{enumerate}
\end{Prop}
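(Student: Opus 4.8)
The plan is to extract from the convexity hypothesis a single geometric datum---through each $z\in X\setminus K$ a positive divisor (a principal one if $K$ is convex with respect to principal hypersurfaces) passing through $z$ and avoiding $K$---and to turn it into a current by truncating a Lelong potential. For (i), fix $z\notin K$ and a holomorphic section $S$ of a line bundle $L$ whose zero divisor $Z_S$ passes through $z$ and misses $K$. In the projective case $L$ is positive; in the Stein case I would first endow $L$ with a smooth metric $h$ whose curvature $\Theta$ is semipositive on $X$ and strictly positive near $z$, which exists by the standard twist of any metric by a convex increasing function of a strictly plurisubharmonic exhaustion; in the principal case $L$ is trivial, $S$ is an entire function, and the weight $\rho$ makes $\Theta=dd^c\rho$ have trivial class. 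By Lelong--Poincar\'e, $u:=\log\Vert S\Vert_h^2$ is $\Theta$-plurisubharmonic, with $dd^cu+\Theta=[Z_S]\geq 0$.

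I would then \emph{truncate}: choose $A>0$ so large that $u>-A$ on a fixed neighborhood of $K$, set $v=\max(u,-A)$, and put $T:=dd^cv+\Theta$. As a maximum of two $\Theta$-plurisubharmonic functions, $v$ is again $\Theta$-plurisubharmonic, so $T\geq 0$ and its potential $v$ is continuous; on the neighborhood of $K$ where $v=u$ one has $T=[Z_S]=0$; and on the open tube $\{u<-A\}\ni z$ one has $v\equiv -A$, so $T=\Theta$ is smooth and strictly positive at $z$. Finally $[T]=[\Theta]=c_1(L)\in H^2(X,\Z)$, which vanishes precisely when $L$ is trivial, giving the refinement in the principal case. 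This settles (i).

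For (ii) the strategy is to assemble finitely (or locally finitely) many such currents into one closed positive current $T$ with $[T]\in H^2(X,\Z)$ (trivial in the principal case), with $T\equiv 0$ near $K$ and $T\geq 2\om$ on $X\setminus V$, and then to smooth $T$. In the projective case $X\setminus V$ is compact, so I would cover it by neighborhoods on which the currents $T_z$ of (i) are strictly positive and take a finite integer combination $T=\sum_j m_jT_{z_j}$; this is $\geq 0$ everywhere, vanishes near $K$, dominates $2\om$ on $X\setminus V$, and has integral class $\sum_j m_j\,c_1(L_{z_j})$. Since $T$ has a continuous potential, a Demailly-type regularization of closed positive $(1,1)$-currents produces a smooth closed $\om_\eps$ in the class $[T]$ with $\om_\eps\geq T-\eps\om$ globally, agreeing with $T$ where $T$ is already smooth, in particular $\om_\eps\equiv 0$ near $K$. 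Then (a) follows from $\om_\eps\geq 2\om-\eps\om\geq\om$ off $V$, (c) from $\om_\eps\geq-\eps\om$ on $V$, and (b), (d) are built in; the regularization is exactly what forces the slack $-\eps\om$.

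The main obstacle is the Stein case, where $X\setminus V$ is noncompact, so no finite combination can dominate $\om$ out to infinity while retaining a well-defined integral class. Here I would organize the local divisor data of (i) along a strictly plurisubharmonic exhaustion into a locally finite positive combination (or a regularized maximum) dominating $\om$ on all of $X\setminus V$; in the principal case the cleanest route is to set $\om_\eps=dd^c\Phi$ with $\Phi=h(\tau)$, where $h$ is convex increasing and $\tau$ is a strictly plurisubharmonic barrier with $K\subset\{\tau<a\}\Subset\{\tau<b\}\subseteq V$ and $dd^c\tau\geq\om$, so that $\Phi$ is constant (hence $dd^c\Phi=0$) near $K$, $dd^c\Phi\geq h'(\tau)\,dd^c\tau\geq -\eps\om$ everywhere, and $dd^c\Phi\geq\om$ off $V$, with $[\om_\eps]=0$. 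Verifying that the convexity of $K$ genuinely furnishes such a barrier (equivalently, that the divisor potentials of (i) assemble into a global exhaustion separating $K$ from $X\setminus V$) while keeping the class integral is the crux, and is precisely the step at which the methods of Duval--Sibony and Guedj are invoked.
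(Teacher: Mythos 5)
Your part (i) is correct and is essentially the paper's own construction: the paper sets $\psi=\max\{\log|s|,G\}$ for a smooth positive metric $G$ normalized so that $|s|e^{-G}>1$ on $K$, which is your truncation $v=\max(u,-A)$ up to a shift, and the metric-twisting you invoke in the Stein case is standard. Part (ii), however, contains a genuine gap. The regularization you invoke does not exist: if a smooth closed form satisfied $\om_\eps\geq T-\eps\om$ \emph{as currents}, then $\om_\eps+\eps\om-T$ would be a positive current, and comparing trace measures forces the singular part (with respect to Lebesgue measure) of the trace measure of $T$ to vanish. But the currents produced in (i) are of the form $dd^c\max\{\log|s_j|,G_j\}$ and generically carry a nonzero singular part concentrated on the real hypersurface $\{|s_j|e^{-G_j}=1\}$; already in one variable, $dd^c\max\{\rea z,0\}$ is arc-length measure on a line. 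So no smooth form can dominate $T-\eps\om$. What regularization can preserve is positivity against a \emph{smooth minorant} of $T$: this is exactly why the paper introduces the cutoff form $\gamma=\chi\om\leq MT$, writes $MT=\beta+dd^c\varphi$ via Bott--Chern, and applies Lemma~\ref{L:reg} --- a Richberg/Demailly/B{\l}ocki--Ko{\l}odziej-type statement which the authors prove precisely because it is not available off the shelf --- to obtain a smooth $\wt\varphi$ equal to $\varphi$ near $K$ with $\beta+dd^c\wt\varphi\geq\gamma-2\de\om$. Your clause ``agreeing with $T$ where $T$ is already smooth'' is the nontrivial content of that lemma (and can only be arranged on a compactly contained subset of the smooth locus); it is not a feature of standard regularization theorems.

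The Stein case is a second gap, and both of your suggested routes fail. A locally finite combination of the currents from (i) is not available: each $T_{z_j}$ equals the strictly positive curvature form $\Theta_j$ on the whole region $\{\Vert s_j\Vert<1\}$, a noncompact set containing a neighborhood of the divisor $Z_{s_j}$; nothing in (i) confines these regions, so the proposed infinite sum diverges wherever infinitely many of them overlap, and integrality of the classes forbids rescaling by small constants. The barrier $\tau$ you propose in the principal case cannot exist in general: a global strictly plurisubharmonic $\tau$ with $K\subset\{\tau<a\}\Subset\{\tau<b\}\subseteq V$ for small neighborhoods $V$ would make $K$ convex with respect to plurisubharmonic functions, hence $\hol(X)$-convex, which is strictly stronger than convexity with respect to principal hypersurfaces. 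Concretely, the distinguished boundary $\{|z_1|=|z_2|=1\}$ of the bidisc is convex with respect to principal hypersurfaces in $\C^2$, yet its holomorphic hull is the closed bidisc, so no such $\tau$ exists when $V$ is a thin tube around it. The paper resolves the noncompactness differently: the finite sum of currents is required to dominate $\om$ only on a large \emph{compact} set $B_2\setminus W$, the regularization is performed there as above, and positivity near infinity is restored by adding a globally $dd^c$-exact correction built from the potential $\rho$ of $\om$, cut off so as to vanish on a neighborhood of $\overline V$; exactness keeps the cohomology class integral (trivial in the principal case) and leaves properties (b) and (c) untouched.
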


In the case when $X$ is a projective manifold, the above result is Proposition~2.7 in \cite{Gu99}. 
The proof of (ii) presented in \cite{Gu99} is terse, and merely refers to Richberg's regularization technique. Although this technique appears in various forms in the literature, it is difficult to locate a precise result that grants (ii). For the sake of completeness, we provide a detailed proof of (ii) via a regularization result in the spirit of Lemma~2.15 in \cite{De92} and Theorem~2 in \cite{BlKo07}.

Recall that a function $\phi:X\rightarrow\rl\cup\{-\infty\}$ on a complex manifold $X$ is said to be {\em quasi-plurisubharmonic} (qpsh) if it is locally the sum of a smooth function and a plurisubharmonic function. Given a  continuous $(1,1)$-form $\alpha$ on $V$, $\phi$ is said to be {\em $\alpha$-plurisubharmonic} ($\alpha$-psh) if $\phi$ is qpsh and  $\alpha+dd^c\phi\geq 0$ in the weak sense of currents. The class of $\alpha$-psh functions on $X$ is denoted by $PSH(X,\alpha)$. Given $\varphi_1,...,\varphi_p\in PSH(X,\alpha)$, a regularized maximum of $\varphi_1,...,\varphi_p$ is a function of the form $z\mapsto M_{\eta}(\varphi_1(z),...,\varphi_p(z))$, $z\in X$, where
       \beas
        M_\eta(t_1,...,t_p)
        &=&
\int_{\rl^p}\max\{t_1+\eta_1s_1,...,t_p+\eta_ps_p\}\prod_{j=1}^p\theta(s_j)ds_1...ds_p.
    \eeas
for some $\eta=(\eta_1,...,\eta_p)\in(0,\infty)^p$, and nonnegative $\theta\in\cont^\infty(\rl)$ with support in $[-1,1]$, $\int_\rl\theta(h)=1$ and $\int_\rl h\theta(h)dh=0$. It is easy to check that
\begin{enumerate}
\item [(a)] when $p=1$, $M_\eta(t)=t$,
\item [(b)]  $\max\{t_1,...,t_p\}\leq M_\eta(t_1,...,t_p)\leq \max\{t_1+\eta_1,...,t_p+\eta_p\}$.
\end{enumerate}
The following result provides a sufficient condition for patching up smooth $\alpha$-psh functions.

\begin{lemma}[{\cite[Corollary~5.19]{De12}}]\label{L:patch} Let $X$ be a complex manifold, $\alpha$ be a continuous $(1,1)$-form on $X$, and $\{\Om_j\}_{j\in\N}$ be a locally finite open covering of $X$ consisting of compactly contained open sets. Let $\varphi_j\in\cont^\infty(\overline{\Om_j})\cap PSH(\Om_j,\alpha)$ and $\eta_j>0$, $j\in\N$, be such that
	\begin{enumerate}
\item [(i)] $\varphi_k(z)<\max_{j:\Om_j\ni z}\{\varphi_j(z)\}$,
\item [(ii)] $\varphi_k(z)+\eta_k\leq \max_{j:\Om_j\ni z}\{\varphi_j(z)-\eta_j\}$,
\end{enumerate}
for all $k\in\N$ and $z\in b\Om_k$. Then the function
	\be\label{E:regmax}
		\wt\varphi(z)=M_{(\eta_j)}(\varphi_j(z)),\qquad j\ \text{such that }z\in\Om_j,
	\ee
is smooth and $\alpha$-psh on $X$.
\end{lemma}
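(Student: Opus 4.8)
The plan is to build $\wt\varphi$ from the local pieces using only the formal properties of the regularized maximum, thereby reducing the global assertion to a purely local statement at each point of $X$.

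First I would record the properties of $M_\eta$ that drive everything. As a convolution of the convex, $1$-Lipschitz function $\max$ with a smooth kernel, $M_\eta$ is smooth on $\rl^p$, convex, non-decreasing in each variable, and satisfies $M_\eta(t_1+c,\dots,t_p+c)=M_\eta(t)+c$; differentiating this identity gives $\sum_j \partial_{t_j}M_\eta\equiv 1$, while monotonicity gives $\partial_{t_j}M_\eta\geq 0$ and convexity gives a nonnegative Hessian. Two consequences are needed. (1) If $\varphi_1,\dots,\varphi_p$ are smooth and $\alpha$-psh, then $u=M_\eta(\varphi_1,\dots,\varphi_p)$ is smooth and $\alpha$-psh, since $dd^c u=\sum_j (\partial_j M_\eta)\,dd^c\varphi_j+\sum_{j,k}(\partial_j\partial_k M_\eta)\,d\varphi_j\wedge d^c\varphi_k\geq\big(\sum_j\partial_j M_\eta\big)(-\alpha)+0=-\alpha$, the second sum being a positive $(1,1)$-form by convexity. (2) (Deletion property.) Because $\theta$ is supported in $[-1,1]$, the $k$-th candidate $t_k+\eta_k s_k$ never exceeds $t_k+\eta_k$ on the support of the integrand; hence on the set $\{t_k+\eta_k\le\max_{j\neq k}(t_j-\eta_j)\}$ the $k$-th slot never realizes the maximum inside the integral, so there $M_\eta$ equals the regularized maximum of the remaining variables and is independent of $t_k$.

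Next I would localize. Fix $z_0\in X$. By local finiteness there is a neighbourhood $W_0$ of $z_0$ meeting only finitely many $\Om_j$; shrinking $W_0$, I discard every index with $z_0\notin\overline{\Om_j}$ and split the rest into $A=\{j:z_0\in\Om_j\}$ and $C=\{k:z_0\in b\Om_k\}$. For $j\in A$ the function $\varphi_j$ is smooth and $\alpha$-psh on a full neighbourhood of $z_0$, and for $k\in C$ smoothness on $\overline{\Om_k}$ lets me extend $\varphi_k$ smoothly across $b\Om_k$ into $W_0$. The goal is to show that, after shrinking $W_0$,
\[
\wt\varphi=M_{(\eta_j)_{j\in A}}\big((\varphi_j)_{j\in A}\big)\quad\text{on }W_0,
\]
for then $\wt\varphi$ is smooth and $\alpha$-psh at $z_0$ by consequence (1), and $z_0$ is arbitrary.

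The heart of the matter, and the step I expect to be the main obstacle, is deleting the boundary indices $C$ from the regularized maximum on a whole neighbourhood, not merely along the boundaries where the hypotheses are assumed. Writing $I(z)=\{j:z\in\Om_j\}=A\cup(C\cap I(z))$, the deletion property reduces everything to showing that each $k\in C$ is inactive near $z_0$, i.e. that $g_k:=\max_{j\in A}(\varphi_j-\eta_j)-(\varphi_k+\eta_k)\geq 0$ holds throughout a neighbourhood of $z_0$. Condition (ii) applied along $b\Om_k$ gives precisely $g_k\geq 0$ on $b\Om_k$, which by the deletion property already makes $k$ removable without changing the value at every boundary point, while condition (i) supplies the cruder strict domination $\varphi_k<\max_{j\in A}\varphi_j$ there. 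The obstacle is to upgrade the boundary inequality $g_k\geq 0$ to an inequality on an \emph{open} neighbourhood of $b\Om_k$, since only then does the deletion property hold on an open set and yield smooth, rather than merely continuous, gluing as $k$ leaves $I(z)$ across $b\Om_k$; if $g_k$ vanished at a boundary point while turning negative just inside $\Om_k$, the two smooth expressions (with and without $\varphi_k$) would match in value but not to higher order. This is exactly where the strictness built into the hypotheses does the work: the strict inequalities on the compact portion of $b\Om_k$ inside $\overline{W_0}$ persist to a neighbourhood by continuity, confining the active set $\{g_k<0\}$ to the interior of $\Om_k$ and bounding it away from $b\Om_k$. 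Granting this, $k$ enters and leaves $I(z)$ only through points where it is inactive, the deletion property applies on an open set, the displayed identity holds near $z_0$, and consequence (1) finishes the proof.
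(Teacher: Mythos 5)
The paper itself offers no proof of this lemma (it is quoted verbatim from Demailly, Corollary 5.19), so your argument has to stand on its own. Most of it does: the properties of $M_\eta$, the computation showing that a regularized maximum of smooth $\alpha$-psh functions is smooth and $\alpha$-psh, the deletion property, and the localization at $z_0$ with the index split into $A$ and $C$ are all correct, and this is the right architecture. The genuine gap is at the step you yourself single out as the heart of the matter, and your resolution of it fails. You claim that ``the strict inequalities\dots persist to a neighbourhood by continuity, confining the active set $\{g_k<0\}$ to the interior of $\Om_k$ and bounding it away from $b\Om_k$.'' But hypothesis (ii) --- the only one carrying the $\eta$-buffers, hence the only one relevant to deletion --- is \emph{non-strict}, and hypothesis (i), while strict, has no buffers: persistence of $\varphi_k<\max_{j\in A}\varphi_j$ to a neighbourhood says nothing about $g_k$. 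Nothing in the hypotheses keeps $\{g_k<0\}$ away from $b\Om_k$. Concretely (with $\alpha=0$): take $\eta_1=\eta_2=\eta$, $\Om_1=\{|z|<2\}$, $\varphi_1\equiv 4\eta$, and $\Om_2$ an annulus containing $\{1\leq|z|\leq 2\}$ with $\varphi_2=2\eta(|z|^2-1)$, completed to a locally finite cover of $\C$ in any convenient way. On $b\Om_1$ one has $\varphi_1+\eta=5\eta=\varphi_2-\eta$, so (i) holds strictly and (ii) holds with equality; on $\{|z|=1\}\subset b\Om_2$ both hold with room to spare. Yet $g_1=2\eta|z|^2-8\eta<0$ at \emph{every} point of $\Om_1\cap\Om_2$: the active set is the entire overlap and its closure contains $b\Om_1$. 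Your argument concludes nothing in this situation, even though the lemma's conclusion is true in it.

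What rescues the lemma is not openness of the inactive region but a flatness property of $M_\eta$ at the deletion threshold: for smoothness it is enough that the two candidate expressions agree to \emph{infinite order} along $b\Om_k$, and this is automatic from (ii) holding pointwise, non-strictly, on $b\Om_k$. Precisely, writing $F_S=M_{(\eta_j)_{j\in A\cup S}}\big((\varphi_j)_{j\in A\cup S}\big)$, the function $\mathcal{H}(t)=M_{(\eta_j)_{j\in A\cup\{k\}}}(t)-M_{(\eta_j)_{j\in A}}\big((t_j)_{j\in A}\big)$ is smooth in the variables $(t_j)_{j\in A\cup\{k\}}$ and vanishes identically on the closed region $R=\big\{t: t_k+\eta_k\leq\max_{j\in A}(t_j-\eta_j)\big\}$; since $R$ is the closure of its interior (decrease $t_k$), every partial derivative of $\mathcal{H}$ also vanishes on $R$, and hence, by the chain rule, all $z$-derivatives of $F_{A\cup\{k\}}-F_A$ vanish at every point of $b\Om_k$ where (ii) holds. (In the example: $F_{\{1\}\cup\{2\}}-F_{\{2\}}=m(\varphi_1-\varphi_2)$ with $m$ smooth, $m\equiv 0$ on $(-\infty,-2\eta]$, hence flat at $-2\eta$, while $\varphi_1-\varphi_2\equiv-2\eta$ on $b\Om_1$.) A function defined piecewise by two smooth functions on the two sides of $b\Om_k$ that agree to infinite order on $b\Om_k$ is smooth; this is checked by induction on the order of differentiation together with the mean value theorem. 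Replacing your confinement step by this flatness argument --- applied iteratively to the finitely many $k\in C$, and noting that (ii) at $z_1\in b\Om_k$ takes the maximum over $I(z_1)$, which is contained in $A\cup S$ for each relevant $S$, so the required inequality for the larger index set follows --- makes your localization go through and completes the proof.
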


The following result is a minor variation of \cite[Lemma~2.15]{De92} and \cite[Theorem~2]{BlKo07}.

\begin{lemma}\label{L:reg} Let $X$ be a complex manifold and $\alpha$ be a continuous $(1,1)$-form on $X$. Let $\varphi$ be a continuous $\alpha$-psh function on $X$ such that $\varphi$ is smooth on some open set $U\subset X$. Then, for any Hermitian metric $\om$ on $X$, $\de>0$, and open subset $U'\Subset U$, there is a smooth $(\alpha+2\de\om)$-psh function $\wt\varphi$ such that $\varphi=\wt \varphi$ on $U'$ and $\varphi\leq \wt\varphi\leq \varphi+2\de$ on $X$.
\end{lemma}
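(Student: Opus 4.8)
The plan is to realize $\wt\varphi$ as a global regularized maximum obtained by patching local smooth approximations of $\varphi$ via Lemma~\ref{L:patch}. The key point is that a continuous $\alpha$-psh function can be approximated from above, locally, by smooth $\alpha'$-psh functions where the form $\alpha$ is perturbed by an arbitrarily small multiple of $\om$; globalizing this while keeping the approximation \emph{exactly equal} to $\varphi$ on $U'$ is what the regularized-maximum patching buys us.

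First I would fix a locally finite cover $\{\Om_j\}_{j\in\N}$ of $X$ by relatively compact coordinate balls, chosen fine enough that on each $\Om_j$ the form $\alpha$ is controlled by $\alpha+\de\om$ in the sense that standard convolution smoothing of $\varphi$ in the coordinate chart produces a function $\varphi_j\in\cont^\infty(\overline{\Om_j})\cap PSH(\Om_j,\alpha+\de\om)$ with $\varphi\leq\varphi_j\leq\varphi+\epsilon_j$ for a small $\epsilon_j>0$ that we are free to shrink (the extra $\de\om$ absorbs the loss of positivity coming from the variation of $\alpha$ and the metric across $\Om_j$; this is the usual Richberg estimate). On the balls $\Om_j$ meeting $U'$ I would instead simply take $\varphi_j=\varphi$ itself, which is already smooth and $\alpha$-psh there since $\varphi$ is smooth on $U\supset\overline{U'}$. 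The reason to keep $\varphi$ unmodified there is precisely to force $\wt\varphi=\varphi$ on $U'$ at the end.

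Next I would arrange the combinatorial hypotheses (i) and (ii) of Lemma~\ref{L:patch}. Since the smoothings satisfy $\varphi_j\geq\varphi$ with $\varphi_j\to\varphi$ uniformly on compacts as the smoothing parameters shrink, I can select the approximation errors $\epsilon_j$ and the offsets $\eta_j>0$ decreasing rapidly with $j$ so that, at each boundary point $z\in b\Om_k$, some neighboring $\varphi_j$ with $\Om_j\ni z$ strictly dominates $\varphi_k$, and likewise dominates after subtracting the $\eta$'s; this is the standard bookkeeping where one makes the ``interior'' smoothings slightly larger than the ``boundary'' ones. Applying Lemma~\ref{L:patch} with the form $\alpha+\de\om$ in place of $\alpha$ then yields a smooth function $\wt\varphi=M_{(\eta_j)}(\varphi_j)$ that is $(\alpha+\de\om)$-psh on $X$; choosing $\de$ there to be half of the stated $\de$ (so the patched form is $\alpha+2\de\om$, matching the claimed conclusion) handles the factor of two. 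The two-sided bound $\varphi\leq\wt\varphi\leq\varphi+2\de$ follows from property (b) of the regularized maximum together with the choice $\varphi\leq\varphi_j\leq\varphi+\epsilon_j$ and $\sum$-control forcing all $\epsilon_j,\eta_j\leq\de$, say.

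The main obstacle will be the equality $\wt\varphi=\varphi$ on $U'$ rather than a mere inequality. On $U'$ every relevant $\varphi_j$ either equals $\varphi$ (for balls inside $U$) or is a strict upper perturbation $\varphi+\epsilon_j$; since the regularized maximum $M_\eta$ returns exactly its argument when there is a single active index ($M_\eta(t)=t$), I must ensure that near $U'$ the \emph{only} local functions entering the regularized maximum are the ones equal to $\varphi$. This requires shrinking the cover so that balls $\Om_j$ meeting $U'$ are themselves contained in $U$ and so that, on a neighborhood of $U'$, any ball coming from outside $U$ has its smoothed value strictly below the genuine $\varphi$; arranging this strict separation uniformly, while simultaneously satisfying the patching inequalities (i)--(ii) at the boundaries $b\Om_k$, is the delicate part and is exactly where the freedom to take $\eta_j,\epsilon_j$ as small as we like is spent. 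Once the separation holds, $\wt\varphi$ coincides with the single active branch $\varphi$ on $U'$, completing the proof.
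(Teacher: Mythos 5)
Your overall strategy (local convolution smoothings patched by Lemma~\ref{L:patch}, keeping $\varphi$ itself as the local datum near $U'$) is the same as the paper's, but there is a genuine gap at the step you dismiss as ``standard bookkeeping.'' Your local functions satisfy $\varphi\leq\varphi_j\leq\varphi+\epsilon_j$, and you claim hypotheses (i)--(ii) of Lemma~\ref{L:patch} can be arranged by taking $\epsilon_j,\eta_j$ small and ``decreasing rapidly.'' This fails: condition (ii) at $z\in b\Om_k$ requires some neighbor $j$ with $\varphi_j(z)\geq\varphi_k(z)+\eta_k+\eta_j\geq\varphi(z)+\eta_k+\eta_j$, i.e.\ a \emph{lower} bound on $\varphi_j-\varphi$ bounded away from zero at prescribed points. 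Convolution gives no such bound ($u\ast\rho_\eps$ can equal $u$ where $u$ is pluriharmonic), and shrinking the smoothing parameters pushes $\varphi_j$ \emph{closer} to $\varphi$, making the required domination harder, not easier. The missing device is an explicit bump: the paper sets $\varphi_j=u_{j,\eps_j}-f_j+\de_j\bigl(s^2-|\Psi_j(z)|^2\bigr)$ with concentric balls $\Om_j''\Subset\Om_j'\Subset\Om_j$, so that $\varphi_j>\varphi+2\eta_j$ on $\overline{\Om_j''}$ and $\varphi_j<\varphi-2\eta_j$ on $b\Om_j$; since the inner balls $\Om_j''$ still cover $X\setminus U$, every chart boundary point is strictly dominated by some neighbor, which is exactly (i)--(ii). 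The bump costs an extra $\de\om$ of positivity, which together with the $\de\om$ lost in the Richberg step $\alpha\leq dd^cf_j\leq\alpha+\de\om$ is the true source of the factor $2\de$ --- not the ``use half of $\de$'' accounting you propose.

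Two further problems occur in your treatment of the equality region. First, you take $\varphi_j=\varphi$ on \emph{every} ball meeting $U'$; then at a point $z\in b\Om_k\cap U'$ whose active neighbors all carry the same function $\varphi$, condition (i) would require $\varphi(z)<\varphi(z)$. The paper avoids this by using a \emph{single} open set $\Om_0=U$ carrying $u_0=\varphi|_U$, and covering only $X\setminus U$ by charts, all contained in $X\setminus U'$, so that on $U'$ the regularized maximum has exactly one active index and property (a) gives $\wt\varphi=\varphi$ there. Second, your proposed separation mechanism --- that smoothings coming from balls outside $U$ be ``strictly below the genuine $\varphi$'' on a neighborhood of $U'$ --- is impossible within your own framework, since your $\varphi_j\geq\varphi$ everywhere by construction; in the paper the local functions do dip below $\varphi$, but only near their own chart boundaries, and once again this comes from the bump term you are missing.
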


\begin{proof} Let $\Om_0=U$ and $u_0=\varphi|_{U}$. Let $\{\Om_j\}_{j\in\N_+}$ be a locally finite open cover of $X\setminus U$ such that, for each $j\in\N_+$, there is a coordinate patch $(U_j,\Psi_j)$ so that $\Om_j\Subset U_j$ and $\Psi_j(\Om_j)$ is the standard unit ball $\mathbb B^n$ in $\Cn$. Let $0<r<s<1$, $\Om_j''=\Psi_j^{-1}(r\mathbb B^n)$ and $\Om_j'=\Psi_j^{-1}(s\mathbb B^n)$, $j=1,...,k$. By the argument provided in \cite[\S2, pg. 9]{De92}, $\{\Om_j\}_{j\in\N_+}$ can be chosen so that
\begin{itemize}
\smallskip
\item [(i)] $X\setminus U\subset\bigcup_{j\in\N_+}\Om_j''
		\subset \bigcup_{j\in\N_+}\Om_j\subset	X\setminus U'$,
\smallskip
\item [(ii)] the set $\{j\in\N_+:bU\cap \Om_j''\neq\varnothing\}$ is finite, and
\smallskip
\item [(iii)] there exist $f_j\in\cont^\infty(\overline{\Om_j})$ such that $
		\alpha\leq dd^cf_j\leq \alpha+\de\om$ in a neighborhood of $\overline{\Om_j}$.
\end{itemize}
We now produce $\varphi_j\in\cont^\infty(\overline{\Om_j})\cap PSH(\Om_j,\alpha+\de\om)$ satisfying the conditions of Lemma~\ref{L:patch}. Let $\{\rho_\eps\}_{\eps>0}$ be a family of smoothing kernels on $\Cn$. It is known, see, e.g.,~\cite[Theorem 2.9.2]{K91} that
if $\Omega\subset\mathbb{C}^n$ is open, $u\in PSH(\Omega)$, $\varepsilon>0$, and $\Omega_\varepsilon=\{z\in\Omega\,:\,\text{dist}(z,b\Omega)>\varepsilon\}\neq\varnothing$, then $\{u*\rho_\varepsilon\}_{\varepsilon>0}\subset\mathcal{C}^{\infty}\cap PSH(\Omega_{\varepsilon})$ monotonically decreases to $u$ locally uniformly as $\varepsilon\to 0$.
It follows then from (iii) that
$$
u_j =\varphi+f_j\in\cont(\overline{\Om_j})\cap PSH(\overline{\Om_j}).
$$
Let $u_{j,\eps}$ be the regularization of $u_j$ given by
	\bes
		u_{j,\eps}=\left[ \left(u_j\circ\Psi^{-1}_j\right)\ast\rho_\eps
						\right]\circ\Psi_j,
	\ees
for sufficiently small $\eps$. Then, $\{u_{j,\eps}-f_j\}_{\eps>0}$ is a decreasing family of smooth functions in $PSH(\overline{\Om_j},\alpha+\de\om)$ that converges uniformly to $u_j-f_j=\varphi$ on  $\overline{\Om_j}$ as $\eps\rightarrow 0$.
For $j\in\N_+$, let
    \bes
        \varphi_j(z)=u_{j,\eps_j}(z)-f_j(z)+\de_j(s^2-|\Psi_j(z)|^2),
\qquad z\in\overline{\Om_j},
    \ees
for $\eps_j$ and $\de_j$ small enough so that $\varphi_j\leq \varphi+\de$ and $\alpha+dd^c \varphi_j\geq -2\de\om$. Let
   \bes
        \eta_j=\begin{cases}
            \de_j\min\left\{(s^2-r^2)/2,(1-s^2)/4\right\},&
				\text{if }j\in\N_+,\\
            \min\{\eta_j:bU\cap\Om_j''\neq\varnothing,\ j\geq 1\},& \text{if }j=0.
    \end{cases}
    \ees
Further shrink $\de_j$ and $\eps_j$ so that for all $j\in\N_+$, $\eta_j<\de$, and $u_{j,\eps_j}<u_j+2\eta_j$ on $\overline{\Om_j}$, i.e.,
\bes
\varphi=u_j-f_j\leq u_{j,\eps_j}-f_j<u_j-f_j+2\eta_j=\varphi+2\eta_j
\qquad \text{on }\overline{\Om_j}.
\ees
Since $\de_j(s^2-|\Psi_j(z)|^2)\leq -4\eta_j$ on $b\Om_j$ and $\de_j(s^2-|\Psi_j(z)|^2)>2\eta_j$ on $\overline{\Om_j''}$, we have that
    \bea
        \varphi_j&<&\varphi-2\eta_j\ \text{on } \ b\Om_j,\label{E:bdy}\\
        \varphi_j&>& \varphi+2\eta_j\ \text{on } \ \overline{\Om_j''}.
			\label{E:int}
        \eea
Thus, if $z\notin \overline{\Om_0}=\overline U$, then Condition (ii) of 	Lemma~\ref{L:patch} is satisfied.

If $z\in b\Om_0=bU$, then by \eqref{E:int}, $\varphi_0(z)+\eta_0=\varphi(z)+\eta_0<\varphi_j(z)-\eta_j$ for any $j$ such that $z\in\overline{\Om_j''}$.
 On the other hand, if $z\in\Om_0$ and $z\in b\Om_j$ for some $j>0$, then by \eqref{E:bdy}, $\varphi_j(z)+\eta_j<\varphi(z)-\eta_j\leq\varphi_0(z)-\eta_0$. Thus, the collection $\{\Om_j,\varphi_j,\eta_j\}_{j\in\N_+}$ satisfies the hypothesis of Lemma~\ref{L:patch} for the continuous $(1,1)$-form $\alpha+2\de\om$, and
 $\wt\varphi(z)$ as given in \eqref{E:regmax}
	is smooth and $(\alpha+2\de\om)$-psh on $X$. By property (a) of regularized maxima, and the fact that $U'\cap\Om_j=\varnothing$ for all $j\neq 0$, we have that $\wt\varphi=\varphi$ on $U'$. By property (b) of regularized maxima, and the fact that $\varphi(z)\leq \max_{j:\Omega_j\ni z} \varphi_j(z)$ and $\max_{j:\Omega_j\ni z}(\varphi_j(z)+\eta_j)\leq \varphi(z)+2\de$, we have that $\varphi\leq\wt\varphi\leq \varphi+2\de$.
\end{proof}

\noindent{\em Proof of Proposition~\ref{P:Guedj2.7}}. (i) Let $z\notin K$. Then there is a positive holomorphic line bundle $L$, an $s\in\Gamma(X,L)$, and a smooth positive metric $G$ of $L$ on $X$ such that $s(z)=0$, $K\cap s^{-1}(0)=\varnothing$ and $|s|e^{-G}>1$ on $K$. Set $\psi=\max\{\log|s|,G\}$. Then $T=dd^c\psi$ is the desired current. When $K$ is convex with respect to principal hypersurfaces, $L$ can be chosen to be the trivial bundle, and hence $[T]=0$.

(ii) Fix an $\eps>0$ and some relatively compact neighborhood $V$ of $K$. Let $W$ be an open neighborhood of $K$ such that $W\Subset V$. When $X$ is Stein, let $\Om_1\Subset \Om_2$ be relatively compact open subsets of $X$ such that $V\Subset \Om_1$. Let $B_j=\overline{\Om_j}$, $j=1,2$. When $X$ is a projective manifold, let $B_1=B_2=X$. Since $B_2\setminus W$ is compact, Part (i) gives finitely many nonnegative closed currents $T_1,...,T_k$ of bidegree $(1,1)$ that admit continuous potentials, vanish on a neighborhood of $K$, have integral cohomology class, and have sum which is strictly positive on $B_2\setminus W$. Let $T=\sum_{j=1}^k T_{j}$. Note that $T$ is a closed $(1,1)$ current with the following properties:
        \begin{enumerate}
            \item $T\geq 0$ on $X$,
			\item $T$ is strictly positive on {$B_2\setminus W$}, i.e., $MT\geq\om$ on {$B_2\setminus W$}, for some positive integer $M$,
            \item $T$ admits a local continuous potential everywhere on $X$,
            \item $T\equiv 0$ on some neighborhood $U\Subset V$ of $K$.
        \end{enumerate}
Let $\chi\in\cont^\infty_0(X)$ be such that $0\leq\chi\leq 1$, $\chi\equiv 1$ on {$B_1\setminus V$} and $\chi\equiv 0$ on {$W\cup(X\setminus B_2)$}. Then, $\gamma=\chi\om$ is a smooth $(1,1)$-form on $X$, and
\be\label{E:gamma}
    MT\geq \gamma\ \text{on }X.
\ee

Since the Bott--Chern cohomology of $X$ can be computed either via currents or via smooth forms~\cite[Remarks after Lemma VI.12.2]{De12}, there is a smooth $(1,1)$-form $\beta$ on $X$ such that $[MT]_{BC}=[\beta]_{BC}$, i.e.,
        \bes
            MT=\beta+dd^c\varphi\quad \text{on }X,
        \ees
for some $(0,0)$-current $\varphi$. 
From $(1)$-$(4)$ and \eqref{E:gamma}, it follows that
            \begin{enumerate}
                \item [(i)] $\varphi$ is a continuous function on $X$,
						\item [(ii)] $\varphi$ is $(\beta-\gamma)$-psh, i.e., $\beta+dd^c\varphi\geq\gamma$ on $X$,
                \item [(iii)] $\varphi$ is smooth on $U$.
            \end{enumerate}
Let $\de>0$ such that $\frac{2\de}{1-2\de}<\eps<1$. Then, by Lemma~\ref{L:reg}, there exists a smooth function $\wt\varphi\in PSH(X,\beta-\gamma+2\de\om)$ such that $\wt\varphi=\varphi$ on some neighborhood $U'\Subset U$ of $K$, and
	\bes
		\beta+dd^c\wt\varphi\geq\gamma-2\de\om=(\chi-2\de)\om\quad \text{on } X.
	\ees
Let $N\in\mathbb N$ such that $N>\frac{1}{1-2\de}$, and
    \bes
    \wt\om_\eps=N(\beta+dd^c\wt\varphi)\quad \text{on } X.
    \ees
Suppose $X$ is projective. Let $\om_\eps=\wt\om_\eps$. Then, (a) holds because $N(\beta+dd^c\wt\varphi)\geq \frac{1}{1-2\de}(\om-2\de\om)=\om$ on $X\setminus V$. Since $\om_\eps=NMT\equiv 0$ on $U'$, (b) holds. Claim (c) holds because, in $V$, $N(\beta+dd^c\wt\varphi)\geq -\frac{2\de}{1-2\de}\om>-\eps\om$. Lastly, (d) holds since $\om_\eps=N\beta=NM[T]\in H^2(X,\Z)$.

Suppose $X$ is Stein. Then $\wt\om_\eps\geq\frac{1}{1-2\de}(\om-2\de\om)=\om$ on $B_1\setminus V$. We obtain $\om_\eps$ by modifying $\wt\om_\eps$ outside $B_1$. Let $\lambda\in\cont_0^\infty(X)$ be such that $\text{supp}(\lambda)\subseteq B_1$, $0\leq\lambda\leq 1$, and $\lambda\equiv 1$ on some neighborhood of $\overline V$. Then, for sufficiently small $\eps'>0$,
	\bes
		\om_\eps=\wt\om_\eps+dd^c(\eps'(1-\lambda)\psi)
	\ees
satisfies $(a)$-$(d)$ since $\om_\eps=\wt\om_\eps$ on some neighborhood of $\overline V$. If, furthermore, $K$ is convex with respect to principal hypersurfaces, then $[\om_\eps]=0$ since $[T]=0$ in (i).
\qed



\section{Proofs of Theorems~\ref{T:main} and \ref{T:stein}}

Let $X$ be either a projective or a Stein manifold.
Suppose there is an open neighborhood $V\Subset U$ of $K$ and a Hodge form $\om$ on $X$ such that $\om=dd^c\varphi$ on $V$. By Sard's theorem, there exist $0<\eps'<\eps$ so that $D_{\eps}=\{z\in U\,:\,\varphi(z)<\eps\}\Subset V$, and both $D_{\eps'}$
and $D_{\eps}$ are smoothly bounded Stein domains. Since $D_\eps$ is smooth, $H_p(D_\eps,\Z)$ and $H^p(D_\eps,\Z)$ are finitely generated for all $p$, and the kernel of the morphism $H^2(D_\eps,\Z)\rightarrow H^2(D_\eps,\rl)\cong H^2(D_\eps,\Z)\otimes\rl$ given by $a\mapsto a\otimes 1$ is the torsion subgroup of $H^2(D_\eps,\Z)$. Moreover, since $D_\eps$ is Stein, $Pic(D_\eps)\cong H^2(D_\eps,\Z)$ via $L\mapsto c_1(L)$.

Let $(L,\sigma)$ be a positive Hermitian line bundle on $X$ such that $dd^c\sigma=\omega$, and thus, $c_1(L)=[\omega]$; see \cite[Theorem V.13.9]{De12}. Since 	$c_1(L|_{D_{\eps}})=[\iota^*_{D_{\eps}}\omega]=[dd^c\varphi]
		=0$ in $H^2(D_\eps,\rl)$, there is some $M\in\N$ so that $Mc_1(L|_{D_{\eps}})=c_1(L^M|_{D_{\eps}})=0$ in $H^2(D_\eps,\Z)$. Thus, $L^M|_{D_\eps}$ is trivial, and the restriction of the metric $\psi=M\sigma$ to $D_\eps$ may be identified with a function, also denoted by $\psi$, via a trivialization of $L^M$ over $D_\eps$. Since the curvature form of a metric is independent of the choice of trivializations, the function $h=\psi-M\varphi\in\text{PH}(D_{\eps})$, the space of pluriharmonic functions on $D_\eps$.

Let $\{\gamma_1,\ldots,\gamma_p\}$ be a $\Z$-basis of $H_1(D_\eps,\mathbb{Z})$, and $\{\nu_1,...,\nu_p\}\subset H^1(D_\eps,\rl)
$ be the dual basis, i.e.,  $\nu_k(\gamma_j)=\de_{jk}$, $1\leq j,k\leq p$,
where $\de_{jk}$ denotes the Kronecker symbol. By \cite[Lemma~1.4]{Gu99}, there is a surjective map $\Phi:PH(D_\eps)\rightarrow H^1(D_\eps,\rl)$ such that for $g\in PH(D_\eps)$,
\be\label{E:Phi}
\text{$g=\log|s|$ for some $s\in\hol^*(D_\eps)$ if and only if  $\Phi(g)(\gamma_j)\in \Z$ for all $j=1,...,p$}.
\ee
By the surjectivity of $\Phi$, there exist $h_1,\ldots h_p\in\text{PH}(D_\eps)$ such that $\Phi(h_j)=\nu_j$, $j=1,...,p$. Thus, there is a dense set $\Lambda$ of vectors $(\lambda_1,...,\lambda_p)\in\rl^p$ such that
\be\label{E:lambda}
	\Phi(h+\lambda_1h_1+\cdots+\lambda_ph_p)(\gamma_j)\in \mathbb Q,\quad \forall j=1,...,p.
\ee
Let $\chi\in\mathcal{C}^{\infty}_0(D_\eps)$ be such that $\chi\equiv 1$ on $D_{\eps'}$. Let $(\lambda_1,...,\lambda_p)\in\Lambda$ be so small in norm so that the function
\bes
	\wt\psi=\psi+\chi\left(\lambda_1h_1+\ldots+\lambda_ph_p\right)
\ees
is strictly plurisubharmonic on $D_\eps$. Since $\wt\psi$ coincides with $\psi$ outside a compact subset of $D_\eps$, $\wt\psi$ extends to a positive Hermitian metric on $L^M$, also denoted by $\wt\psi$. The proof will be completed by applying Lemma~\ref{L:guedj} to $\left(L^N,N\wt\psi\right)$ for sufficiently large $N\in\N$.

For this, observe that by \eqref{E:lambda} and \eqref{E:Phi}, there exists an $N\in\N$ and $s\in\hol^*(D_\eps)$ such that $|s|=e^{N(h+\lambda_1h_1+\cdots+\lambda_ph_p)}$ on $D_\eps$. Now, $s$ can be viewed as a holomorphic section of $L^{MN}|_{D_\eps}$ via the $N^{\text{th}}$ power of the same trivialization of $L^M$ as considered earlier. Thus, we have that
	\beas
	K&=&\{z\in D_{\eps'}:\varphi(z)\leq 0\}\\
	&=&\{z\in D_{\eps'}:e^{M\varphi(z)}\leq 1\}\nonumber\\
	&=&\left\{z\in D_{\eps'}\,:\,e^{\psi(z)}\leq e^{h(z)}\right\}\\
	&=&\left\{z\in D_{\eps'}:
	e^{N(\psi(z)+\lambda_1h_1(z)+\cdots+\lambda_ph_p(z))}
		\leq e^{N(h(z)+\lambda_1h_1(z)+\cdots+\lambda_ph_p(z))}\right\}\\
	&=&\left\{z\in D_{\eps'}\,:\,e^{N\wt\psi(z)}\leq |s(z)|\right\}
	=\left\{z\in D_{\eps'}:\Vert s(z)\Vert_{-N\wt\psi}\geq 1\right\}.
\eeas
Thus, by Lemma~\ref{L:guedj}, $K$ is rationally (resp. hypersurface) convex in $X$. In the case when $[\om]=0$, $L$ is the trivial bundle. Thus, by Lemma~\ref{L:guedj}, for every $a\notin K$, there is a holomorphic function $f$ on $X$ whose zero locus separates $a$ from $K$. Thus, $K$ is convex with respect to principal hypersurfaces.

Conversely, suppose $K$ is a rationally or hypersurface convex compact. Fix a K\"ahler metric $\omega$ on $X$, and a neighborhood $V$ of $K$ such that $V\Subset U$.
If $X$ is Stein, in addition choose $\omega = dd^c\rho$ for some strictly plurisubharmonic function $\rho$ on $X$.
Let $\chi\in \cont_0^\infty(U)$ be such that $\chi\equiv 1$ on $V$. Then $\chi\varphi$ extends to a smooth function on $X$, which we also denote by $\chi\varphi$. Since $\varphi$ is strictly plurisubharmonic on $U$ and $\text{supp }\chi$ is compact, there is an $M\in\N$ large enough so that  $dd^c(\chi\varphi)=dd^c\varphi\geq\frac{1}{M}\omega$ on $V$ and $
	dd^c\left(\chi\varphi\right)\geq -M\omega$ on $X\setminus V.$

Let $\eps=-\frac{1}{4M^2}$. By Proposition~\ref{P:Guedj2.7}, there is a smooth $(1,1)$-form $\omega_{\eps}$ on $X$ such that
\begin{itemize}
	\item[(a)] $\omega_{\eps}\geq\omega$ on $X\setminus V$,
	\item[(b)] $\omega_{\eps}\equiv 0$ in a neighborhood of $K$,
	\item[(c)] $\omega_{\eps}\geq -\frac{1}{4M^2}\cdot\omega$ in $V$,
	\item[(d)] $[\omega_{\eps}]\in H^2(X,\mathbb{Z})$.
\end{itemize}
Define $\wt\omega(z)=dd^c\left(\chi\varphi\right)+2M\omega_{\eps}$.
	On a neighborhood of $K$, $\wt\omega=dd^c\varphi$. When $z\in X\setminus V$, $\wt\omega=dd^c\left(\chi\varphi\right)+2M\omega_{\eps}
\geq M\omega>0.$ When $z\in V$, $	\wt\omega=dd^c\varphi+2M\omega_{\eps}
\geq\frac{1}{2M}\omega>0.$ Thus, $\wt\om$ is a K{\"a}hler form. Lastly, note that $[\wt\omega]=[M\omega_{\eps}]\in H^2(X,\mathbb{Z})$, and is $0$ if $K$ is convex with respect to principal hypersurfaces. Thus, $\wt\omega$ is the desired Hodge form on $X$. This completes the proof of Theorems~\ref{T:main} and~\ref{T:stein}.

\section{Proof of Theorem~\ref{T:OkaWeil}}

\begin{lemma}\label{l.mmm}
	Let $s$ be a holomorphic section of a positive line bundle on a complex projective manifold $X$. Then every function holomorphic on $X\setminus s^{-1}(0)$ is the uniform limit on compacts of meromorphic functions with poles on $s^{-1}(0)$.
\end{lemma}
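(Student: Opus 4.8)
The plan is to recognize $X \setminus s^{-1}(0)$ as an affine variety and reduce the statement to polynomial approximation of entire functions. Write $D = s^{-1}(0)$ for the zero divisor of the section, and let $L$ be the ambient positive line bundle, so that $s \in H^0(X,L)$. Since $L$ is positive, it is ample by the Kodaira embedding theorem, and hence $D$ is an effective ample divisor. By the standard fact that the complement of an ample divisor in a projective variety is affine, the open set $Y := X \setminus D$ is a smooth affine variety. Moreover, its ring of regular functions is exactly
\[
\mathcal{O}_{\mathrm{alg}}(Y) = \bigcup_{m \ge 0} s^{-m}\, H^0(X, L^m) = \{\, g/s^m : m \ge 0,\ g \in H^0(X, L^m)\,\},
\]
the degree-zero part of the localization at $s$ of the section ring $\bigoplus_m H^0(X, L^m)$. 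Each such $g/s^m$ is a meromorphic function on $X$ whose polar set is contained in $D = s^{-1}(0)$. Thus it suffices to show that every $f \in \mathcal{O}(Y)$ is a locally uniform limit of elements of $\mathcal{O}_{\mathrm{alg}}(Y)$.

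For the approximation, I would use a closed embedding of the affine variety into complex Euclidean space. Choose finitely many generators $g_1/s^{m_1}, \dots, g_N/s^{m_N}$ of the $\C$-algebra $\mathcal{O}_{\mathrm{alg}}(Y)$; these define a closed embedding $\iota : Y \hookrightarrow \C^N$ under which the regular functions on $Y$ are exactly the restrictions to $\iota(Y)$ of polynomials on $\C^N$ (substituting $z_i = g_i/s^{m_i}$ and clearing denominators returns a function of the form $g/s^m$). Since $Y$ is affine it is Stein, and $\iota(Y)$ is a closed analytic submanifold of $\C^N$; by Cartan's extension theorem (Theorem B) the restriction map $\mathcal{O}(\C^N) \to \mathcal{O}(\iota(Y))$ is surjective, so the given $f$ extends to an entire function $F$ on $\C^N$. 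The Taylor partial sums of $F$ about the origin are polynomials converging to $F$ uniformly on every compact subset of $\C^N$. Restricting these polynomials to $\iota(Y)$ yields regular functions on $Y$, i.e.\ functions of the form $g/s^m$, converging to $f$ uniformly on every compact subset of $Y$. As noted, each approximant is meromorphic on $X$ with poles on $s^{-1}(0)$, which is exactly what is required.

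The analytic content is soft once the algebraic picture is in place: entire functions are globally approximated by their Taylor polynomials, and Cartan's theorem furnishes the needed extension off the closed submanifold $\iota(Y)$. I therefore expect the main obstacle to be the two algebraic-geometric inputs — the affineness of $Y = X \setminus D$ as the complement of an ample divisor, and the identification $\mathcal{O}_{\mathrm{alg}}(Y) = \bigcup_m s^{-m} H^0(X, L^m)$ — which is precisely where positivity of $L$ is used. Throughout, one should invoke GAGA to move freely between the algebraic and holomorphic categories on the projective manifold $X$, so that ``meromorphic'' may be read as ``rational'' and global holomorphic sections of $L^m$ coincide with algebraic ones. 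A more self-contained alternative, avoiding the structure theory of affine varieties, would be to solve the corresponding $\bar\partial$-problem directly by Hörmander's $L^2$-estimates in the spirit of Lemma~\ref{L:guedj}, but the affine reduction above is the most economical route.
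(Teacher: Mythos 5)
Your proposal is correct and follows essentially the same route as the paper's proof: realize $X\setminus s^{-1}(0)$ as a closed subvariety of $\mathbb{C}^N$, extend $f$ to an entire function by the Oka--Cartan theorem, and approximate by Taylor polynomials, whose restrictions pull back to meromorphic functions on $X$ with poles on $s^{-1}(0)$. The only difference is how the affine embedding is produced: the paper simply takes a Kodaira embedding given by a basis $(s_0=s^k,s_1,\ldots,s_N)$ of $\Gamma(X,L^k)$, so that $X\setminus s^{-1}(0)$ lands in the chart $\{z_0\neq 0\}\cong\mathbb{C}^N$ and pullbacks of polynomials are visibly quotients of sections by powers of $s^k$, whereas you invoke the affineness of the complement of an ample divisor, the identification of its coordinate ring with the degree-zero localization of the section ring, and GAGA --- heavier algebro-geometric machinery for the same end.
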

\begin{proof}
	Write $H=s^{-1}(0)$ and let $f\in\mathcal{O}(X\setminus H)$. Since $L$ is positive, the Kodaira embedding theorem gives a $k\in\mathbb{N}$ and a basis $(s_0=s^k,s_1,\ldots,s_N)$ of $\Gamma(X,L^k)$ so that the map $\Phi:X\to\mathbb{CP}^{N}$ given by
\[
	x\mapsto [s_0(x):\ldots:s_N(x)]
\]
	defines a holomorphic embedding of $X$ onto a subvariety $V$ of $\mathbb{CP}^N$ with $L=\Phi^*(\mathcal{O}(1)|_V)$. It follows that $X\setminus H$ maps into $\mathbb{C}^N\cong\mathbb{CP}^N\setminus\{z_0=0\}$. Therefore $f\circ\Phi^{-1}$ is a holomorphic function on a subvariety of $\mathbb{C}^N$ and hence by the Oka--Cartan extension theorem, there exists an $F\in\mathcal{O}(\mathbb{C}^N)$ which restricts to $f\circ\Phi^{-1}$ on $V$. Expanding $F$ into a power series and precomposing its Taylor polynomials by $\Phi$ gives the desired sequence of meromorphic functions.
\end{proof}

\begin{proof}[Proof of Theorem~\ref{T:OkaWeil}]
	Suppose that $K$ is rationally convex and let $U$ denote a neighborhood of $K$ on which $f$ is defined. By the rational convexity of $K$, for every $p\not\in K$, there exists a section $s$ of a positive line bundle $L$ with zero set $h$ such that $p\in h$ and $K\cap h=\varnothing$. Since $X\setminus h$ is a Stein manifold which contains $K$ as a compact subset, it follows that $\widehat{K}_{\mathcal{O}(X\setminus h)}$ (the holomorphically convex hull of $K$ in the Stein manifold $X\setminus h$) is a compact set of $X\setminus h$, and hence there is a neighborhood $V$ of $p$ that is disjoint from $\widehat{K}_{\mathcal{O}(X\setminus h)}$. By compactness we can cover $X\setminus U$ by finitely many neighborhoods $V_1,\ldots,V_k$ with associated hypersurfaces $h_1,\ldots h_k$ coming from global sections $s_1,\ldots, s_k$ of positive holomorphic line bundles $L_1,\ldots, L_k$. Therefore,
\[
	\widehat{K}_{\mathcal{O}(X\setminus h_1)}\cap\ldots\cap\widehat{K}_{\mathcal{O}(X\setminus h_k)}\subset U.
\]
	Set $s^*=s_1\cdots s_k\in\Gamma(X,L_1\otimes\cdots\otimes L_k)$. The zero set of $s^*$ is $h^*=h_1\cup\ldots\cup h_k$, and furthermore
\[
	\widehat{K}_{\mathcal{O}(X\setminus h^*)}\subseteq\widehat{K}_{\mathcal{O}(X\setminus h_1)}\cap\ldots\cap\widehat{K}_{\mathcal{O}(X\setminus h_k)}.
\]
	The given function $f$ is holomorphic on a neighborhood of $\widehat{K}_{\mathcal{O}(X\setminus h^*)}$, so the classical Oka--Weil theorem for Stein manifolds shows that it is the uniform limit on $K$ of a sequence of functions in $\mathcal{O}(X\setminus h^*)$. Via Lemma~\ref{l.mmm} above, elements of $\mathcal{O}(X\setminus h^*)$ can be approximated uniformly on compacts by meromorphic functions on $X$ with poles in $h^*$. This completes the proof.
\end{proof}


\section{Proof of Theorem~\ref{T:char}}

In this section, we use the fact that, for compact subsets of Stein manifolds, convexity with respect to (principal) hypersurfaces is equivalent to (strong) meromoprhic convexity; see \cite[Definition 1.1 \& Proposition 1.2]{BS}. For the sake of convenience, we refer to convexity with respect to (principal) hypersurfaces as (strong) meromorphic convexity throughout this section. By \cite[Lemma 2.2]{Gu99}, a similar functional characterization holds for rational convexity in a projective manifold $X$, where the appropriate class of meromorphic functions is
	\bes
	\mathcal M^+(X)=\left\{\frac{f}{g}: f,g\in\Gamma(X,L)\ \text{for some positive } L\in Pic(X)\right\}.
	\ees
In all three cases, we also need the following generalization of Corollary~1.5.4. in \cite{St07}. The proof is identical to that of Corollary~1.5.4 in \cite{St07}, and relies on the Oka--Weil theorem for (strongly) meromorphically convex sets in Stein manifolds and rationally convex sets in projective manifolds, see \cite[Theorem 2.1]{BS}, \cite[Theorem 3.4]{Ro61}, \cite[Theorem 2]{Hi71}, and Theorem~\ref{T:OkaWeil}.

\begin{lemma}\label{L:conncomp} Let $X$ be a Stein manifold, and $K\subset X$ be a compact subset that is convex with respect to (principal) hypersurfaces. Suppose $K$ is the disjoint union of compact sets $L$ and $M$. Then $L$ and $M$ are convex with respect to (principal) hypersurfaces.
The same result holds for a rationally convex compact subset of a projective manifold.
\end{lemma}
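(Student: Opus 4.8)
The plan is to pass to the functional language set up above and then run the standard hull-splitting argument of \cite[Corollary~1.5.4]{St07}. In each of the three situations fix the function class $\mathcal A$ matching the convexity notion at hand: meromorphic functions with poles off the relevant compact (ordinary meromorphic convexity), ratios of entire functions (strong meromorphic convexity), or $\mathcal M^+(X)$ in the projective case. For a compact $E$ let $\widehat E$ denote the associated hull. By the equivalences recalled above (\cite[Proposition~1.2]{BS} in the Stein cases and \cite[Lemma~2.2]{Gu99} in the projective case), the hypothesis is $\widehat K=K$ and the desired conclusion is $\widehat L=L$ together with $\widehat M=M$.

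First I would manufacture a separating function. As $L$ and $M$ are disjoint compacts, the function that equals $0$ on a neighborhood of $L$ and $1$ on a neighborhood of $M$ is holomorphic on a neighborhood of $K$. Since $K$ is $\mathcal A$-convex, the Oka--Weil theorem for such sets --- \cite[Theorem~2.1]{BS} in the Stein cases, and Theorem~\ref{T:OkaWeil} in the projective case --- allows me to approximate this locally constant datum uniformly on $K$ by a function $g\in\mathcal A$ with $|g|<\tfrac14$ on $L$ and $|g-1|<\tfrac14$ on $M$. Being uniformly close to bounded values on $K$, the function $g$ has no poles on $K=L\sqcup M$, so it is an admissible test function for the hulls of both $L$ and $M$.

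Next I would compute the hull of $L$. Monotonicity of hulls gives $\widehat L\subseteq\widehat K=K=L\sqcup M$, while for every $z\in M$ the estimates yield $|g(z)|>\tfrac34>\tfrac14>\sup_L|g|$, so that $z\notin\widehat L$. Hence $\widehat L\cap M=\varnothing$, forcing $\widehat L\subseteq L$ and therefore $\widehat L=L$. Interchanging the roles of $L$ and $M$ (and replacing $g$ by $1-g$, which stays in $\mathcal A$) gives $\widehat M=M$. Translating back through the cited equivalences shows that $L$ and $M$ are convex with respect to (principal) hypersurfaces, and the identical computation carried out with $\mathcal M^+(X)$ disposes of the projective case.

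The hull inclusion and the separation estimate are routine; the step requiring genuine care is the first one, namely verifying that the Oka--Weil approximant may be taken in the precise class attached to each convexity notion --- ratios of entire functions in the strong/principal case and quotients of sections of a single positive line bundle (so that $g\in\mathcal M^+(X)$) in the projective case --- and that the cited approximation theorems really do apply to the disconnected holomorphic datum on $K$. This is exactly the input that the preliminary discussion and Theorem~\ref{T:OkaWeil} were arranged to provide, so beyond keeping the three cases in parallel no new ingredient is needed.
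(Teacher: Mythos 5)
Your proof is correct and takes essentially the same route as the paper: the paper's proof is exactly the hull-splitting argument of Corollary~1.5.4 in \cite{St07}, run through the functional characterizations of (strong) meromorphic and rational convexity (\cite[Proposition~1.2]{BS}, \cite[Lemma~2.2]{Gu99}) and the corresponding Oka--Weil theorems, which is precisely what you carry out. The only cosmetic point is that the absence of poles of $g$ on $K$ should be attributed to the cited Oka--Weil statements themselves (they produce approximants with poles off $K$) rather than to the uniform estimate, but this changes nothing in the argument.
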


\begin{proof}[Proof of Theorem~\ref{T:char}] Let $X$ be a Stein manifold, and $K\subset X$ be a (strongly) meromorphically convex compact set. Let $U\subset X$ be a relatively compact open set that contains $K$. By the (strong) meromorphic convexity of $K$ and the compactness of $bU$, there exist finitely many (strongly) meromorphic functions $R_1,...,R_m$ on $X$ and an $\eps>0$ so that, for each $p\in bU$, there exists a $j\in\{1,...,m\}$ such that $R_j$ is well-defined on $K\cup\{p\}$ and satisfies
	\be\label{E:rat}
		|R_j(p)|>\Vert R_j\Vert_K+2\eps,
	\ee
where $\Vert R\Vert_K=\sup_{z\in K}|R(z)|$. Given $\eps'\in(0,\eps]$, let
	\bes
		\Om_{\eps'}=\left\{z\in U: R_j\ \text{is well-defined on $z$ and }
			|R_j(z)|<\Vert R_j\Vert_K+\eps',\ \forall j=1,...,m\right\}.
	\ees
Then, owing to \eqref{E:rat}, $\overline \Om_{\eps'}$ is a union of connected components of a (strongly) meromorphically convex compact set in $X$. By Lemma~\ref{L:conncomp}, $\overline\Om_{\eps'}$ is itself a (strongly) meromorphically convex compact in $X$. It is clear that $K\subset\Om_{\eps'}$ for all $\eps'\in(0,\eps]$. Moreover, given any compact set $L\subset\Om_\eps$, there exists an $\eps'\in(0,\eps)$ such that $L\subset\Om_{\eps'}\Subset\Om_\eps$.

Since $\Om_{\eps}$ is an analytic polyhedron, it is Stein. Thus, it admits a strictly plurisubharmonic exhaustion function, say $\rho$. Choose $c\in\rl$ so that
	 \bes
		D= \left\{z\in \Om_{\eps}:\rho(z)<c\right\}
	\ees
is strongly pseudoconvex, and $K\subset D$. Since $\overline D$ is compact, there exist $0<\eps''<\eps'<\eps$ such that $\Om''=\Om_{\eps''}$ and $\Om'=\Om_{\eps'}$ satisfy
	\bes
		K\subset \overline D\subset \Om''\Subset \Om'\Subset\Om_{\eps}.
\ees
Let $\chi\in\cont^\infty_0(X)$ be such that $\text{supp}(\chi)\subset \Om_{\eps}$, $0\leq \chi\leq 1$, and $\chi\equiv 1$ on $\overline{\Om'}$. Let $\om$ be a K{\"a}hler form on $X$ {\color{black} (in the case of strong meromorphic convexity, assume that $\omega = dd^c h$, where $h$ is a strongly psh function on $X$})  such that {\color{black} after scaling $\omega$ if necessary, we have}
	\bes
		\om+dd^c(\chi\rho)>0\text{ on }X.
	\ees
Let $\de>0$ so that $-\de\om>-dd^c\rho$ on $\Om'$. Then, since $\overline{\Om''}$ is (strongly) meromorphically convex, by Proposition~\ref{P:Guedj2.7}, there exists a smooth closed $(1,1)$-form $\wt\om$ such that $\wt\om\equiv 0$ on a neighborhood of $\overline{\Om''}$, $\wt\om\geq -\de\om$ on $\Om'$, $\wt\om\geq\om$ on $X\setminus\Om'$, and $\wt\om$ has (trivial) integer cohomology class. Then,
	\beas
		dd^c(\chi\rho)+\wt\om\geq
		\begin{cases}
			dd^c\rho-\de\om>0,& \text{ on }\Om'\\
		dd^c(\chi\rho)+\om>0,& \text{ on }X\setminus \Om'.
	\end{cases}
	\eeas
Thus, $dd^c(\chi\rho)+\wt\om$ is a (trivial) Hodge form on $X$ that coincides with $dd^c\rho$ on a neighborhood of $\overline D$. Since $U$ was an arbitrary open neighborhood of $K$, we obtain a neighborhood basis with the desired property.

The other direction of the claim follows from the fact the the intersection of an arbitrary family of (strongly) meromorphically convex sets is (strongly) meromorphically convex.

The case when $X$ is a projective manifold can be argued similarly, with the exception that $R_1,...,R_m$ belong to $\mathcal M^+(X)$.
\end{proof}




\begin{thebibliography}{10}

\bibitem{BlKo07}
Z.~B{\l}ocki and S.~Ko{\l}odziej.
\newblock On regularization of plurisubharmonic functions on manifolds.
\newblock {\em Proc. Amer. Math. Soc.}, 135(7):2089--2093, 2007.

\bibitem{BS}
B.~Boudreaux and R.~Shafikov.
\newblock Meromorphic convexity on Stein manifolds.
\newblock {https://arxiv.org/abs/2310.07066}, 2023.

\bibitem{ElCe}
K.~Cieliebak and Y.~Eliashberg.
\newblock The topology of rationally and polynomially convex domains.
\newblock {\em Invent. Math.}, 199(1):215--238, 2015.

\bibitem{Col}
M.~Col\c{t}oiu.
\newblock On hulls of meromorphy and a class of {S}tein manifolds.
\newblock {\em Ann. Scuola Norm. Sup. Pisa Cl. Sci. (4)}, 28(3):405--412, 1999.

\bibitem{De92}
J.-P. Demailly.
\newblock Regularization of closed positive currents and intersection theory.
\newblock {\em J. Alg. Geom}, 1(3):361--409, 1992.

\bibitem{De92b}
J.-P. Demailly.
\newblock Singular {H}ermitian metrics on positive line bundles.
\newblock In {\em Complex Algebraic Varieties: Proceedings of a Conference held
  in Bayreuth, Germany, April 2--6, 1990}, pages 87--104. Springer, 1992.

\bibitem{De12}
J.-P. Demailly.
\newblock {\em Complex analytic and differential geometry}.
\newblock Universit{\'e} de Grenoble I, 2012.

\bibitem{DuSi95}
J.~Duval and N.~Sibony.
\newblock Polynomial convexity, rational convexity, and currents.
\newblock {\em Duke Math. J.}, 79(2):487--513, 1995.

\bibitem{El90}
Y.~Eliashberg.
\newblock Topological characterization of {S}tein manifolds of dimension $>2$.
\newblock {\em Int. J. Math.}, 1(01):29--46, 1990.

\bibitem{Go13}
R.~E. Gompf.
\newblock Smooth embeddings with {S}tein surface images.
\newblock {\em J.Topol.}, 6(4):915--944, 2013.

\bibitem{Gu99}
V.~Guedj.
\newblock Approximation of currents on complex manifolds.
\newblock {\em Math. Ann.}, 313:437--474, 1999.

\bibitem{Hi71}
A.~Hirschowitz.
\newblock Sur l'approximation des hypersurfaces.
\newblock {\em Ann. Sc. Norm. Super. Pisa Cl. Sci. (3)}, 25(1):47--58, 1971.

\bibitem{K91}
M.~Klimek.
\newblock {\em Pluripotential theory}, volume~6 of {\em London Mathematical
  Society Monographs. New Series}.
\newblock The Clarendon Press, Oxford University Press, New York, 1991.
\newblock Oxford Science Publications.

\bibitem{MaTo}
T.~E. Mark and B.~Tosun.
\newblock On contact type hypersurfaces in 4-space.
\newblock {\em Invent. Math.}, 228(1):493--534, 2022.

\bibitem{Ne08}
S.~Nemirovski.
\newblock Finite unions of balls in $\mathbb{C}^n$ are rationally convex.
\newblock {\em Russian Math. Surveys}, 63(2):381--382, 2008.

\bibitem{NeSi16}
S.~Nemirovski and K.~Siegel.
\newblock Rationally convex domains and singular {L}agrangian surfaces in
  $\mathbb{C}^2$.
\newblock {\em Invent. Math.}, 203(1):333--358, 2016.

\bibitem{Ro61}
H. Rossi.
\newblock Holomorphically convex sets in several complex variables.
\newblock{\em Ann. of Math. (2)}, 74:470--493, 1961.

\bibitem{St07}
E.~L. Stout.
\newblock {\em Polynomial convexity.}
\newblock Springer Science \& Business Media, 2007.

\end{thebibliography}
\end{document}